\documentclass[11pt]{article}
\usepackage[margin=1in]{geometry}
\usepackage[T1]{fontenc}
\usepackage{lmodern}
\usepackage{times}
\usepackage{amsmath,amssymb,amsthm,mathtools}
\usepackage[colorlinks=true,linkcolor=blue,citecolor=blue,urlcolor=blue]{hyperref}
\usepackage{booktabs}

\newtheorem{theorem}{Theorem}[section]
\newtheorem{proposition}[theorem]
{Proposition}
\newtheorem{corollary}[theorem]{Corollary}
\newtheorem{lemma}[theorem]{Lemma}
\newtheorem{remark}[theorem]{Remark}
\newcommand{\cC}{\mathcal C}
\newcommand{\cB}{\mathcal B}
\newcommand{\cD}{\mathcal D}
\newcommand{\I}{\mathrm{I}}
\newcommand{\cM}{\mathcal M}
\newcommand{\Q}{\mathbb Q}
\newcommand{\Z}{\mathbb Z}
\newcommand{\C}{\mathbb C}
\renewcommand{\Vec}{\operatorname{Vec}}
\newcommand{\ad}{\mathrm{ad}}
\newcommand{\pt}{\mathrm{pt}}
\newcommand{\FPdim}{\mathrm{FPdim}}
\newcommand{\Gr}{\mathrm{Gr}}
\newcommand{\hxs}{\mathfrak{s}}
\newcommand{\hxt}{\mathfrak{t}}
\newcommand{\K}{\mathbb K}
\newcommand{\SL}{\operatorname{SL}}
\newcommand{\Ind}{\operatorname{Ind}}
\newcommand{\cO}{\mathcal O}
\newcommand{\sVec}{\mathrm{sVec}}
\newcommand{\one}{\mathbf 1}
\DeclareMathOperator{\Tr}{Tr}
\DeclareMathOperator{\Gal}{Gal}
\DeclareMathOperator{\FSexp}{FSexp}
\DeclareMathOperator{\rank}{rank}

\DeclareMathOperator{\ord}{ord}
\numberwithin{equation}{section}

\begin{document}
\title{Pre-modular fusion categories of global dimension $2p$}
\author{Zhiqiang Yu}
\date{}

\maketitle

\begin{abstract}
Let  $p>5$  be a  prime. In this paper, we show that a  pre-modular fusion category of
  global dimension $2p$ is either pointed or  tensor equivalent to $\text{Rep}(D_{2p})$.
\end{abstract}

\section{Introduction}
Throughout this paper, let $\C$ be the field of complex numbers, $\Q$ and $\bar{\Q}$ denote
the fields of rationals and its algebraic closure, respectively. For $n\in \mathbb{N}$, let $\Z_r:= \Z/r$.

A fusion category $\cC$ over the complex field $\C$ is a semisimple finite tensor category.
Braided fusion categories form an important class of fusion categories, with close connections to vertex operator algebras and conformal field theory \cite{DLN}, quantum groups at root of unity \cite{EGNO}, etc. Their structure and classification have been studied extensively \cite{BGNPRW,BNRW,DrGNO1,NRW,Schopieray2021,Yu2020}. Notice  that the M\"{u}ger center of a braided fusion category is symmetric, which then contains a maximal Tannakian fusion subcategory \cite{DrGNO1,EGNO}. De-equivariantization by this maximal Tannakian subcategory \cite{DrGNO1} reduces many structural results and properties  about braided fusion categories to two fundamental cases: non-degenerate and slightly degenerate fusion categories, whose M\"{u}ger centers are  $\Vec$ and $\sVec$, respectively [6].   If we further assume that they are spherical, then we get super-modular and modular fusion categories, see Section \ref{preliminaries} for definition.

The   $S$-matrix and $T$-matrix (called modular data, see Section \ref{preliminaries}) of modular and super-modular fusion categories encode structural information and satisfy strong algebraic and arithmetic constraints \cite{BNRW,DLN,NRW}. In the modular case, a suitably normalized modular data give rise to congruence representations of the modular group $\SL_2(\Z)$ \cite{DLN,NRW}; in the super-modular case, the reduced $S$-matrix and the matrix determined by  the squared twists of simple objects instead give rise to a projective congruence representation of the  subgroup
$\Gamma_\theta$ of $\SL_2(\Z)$ \cite{BRWZ}.
These connections provide tools from algebraic number theory and representation theory for studying the structure of such fusion categories. 

One of the main topics  in the study of (super-) 
modular fusion categories is their classification. 
Currently, given a specific global dimension, the structure of (super-)modular fusion categories $\cC$ are far away being well-studied. 
Let $p$ be a prime. Pre-modular fusion categories $\cC$ of global dimension   $p$ are pointed if $p\neq 5$ \cite[Proposition 5.11]{Schopieray2021},  that is, every simple object  of $\cC$ is invertible, hence $\cC$ is determined by a pre-metric group $(\Z_p,\eta)$ with quadratic form $\eta$ \cite{DrGNO1}. Non-simple (i.e., there is no non-trivial fusion subcategory) modular fusion categories of global  dimension $p^2$ are completely classified in \cite[Theorem 4.14]{Yu2023}.   

In this paper, we combine algebraic and arithmetic methods, especially congruence representations of $\SL_2(\Z)$, to study pre-modular fusion categories of global dimension $2p$, where $p>5$ is prime. For $p\leq 5$,  their structures are known by \cite{Yu2022}.   Our main result is stated as follows. 
\begin{theorem}
Let $\cC$ be a pre-modular fusion category of global dimension $2p$, where $p>5$ is a prime. Assume $\cC'\subseteq\sVec$, then $\cC$ is pointed.   
\end{theorem}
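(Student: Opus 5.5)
We split according to whether the Müger center $\cC'$ is $\Vec$ or $\sVec$; in both cases the goal is to show that every simple object is invertible.

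\emph{Step 1: $\cC'=\sVec$.}
Here $\cC$ is super-modular, and $\cC$ contains $\sVec$, so $2$ divides $\dim\cC$. First we show that $\cC\cong \sVec\boxtimes\cD$ with $\dim\cD=p$. Fusion subcategories of $\cC$ have global dimension dividing $2p$. The quotient $\cC\boxtimes_{\sVec}$-type reduction (the "fermionic quotient") has dimension $p$.

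We would rather argue directly. Let $\delta$ be the fermion. Since $\dim\cC/2=p$ is odd, the tensoring action $X\mapsto \delta\otimes X$ has no fixed simple objects. Indeed, a fixed object $X$ would contribute $\dim(X)^2$ to the dimension of an "unpaired" part, and for odd dimension over $\sVec$ one knows super-modular categories with an odd-dimensional fermionic quotient split. Concretely, we would use the known splitting result: a super-modular category whose dimension is $2$ times an odd number whose Frobenius--Schur exponent is odd splits as $\sVec\boxtimes\cD$.

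The category $\cD$ is then modular (or pre-modular with trivial center) of dimension $p$. By \cite[Proposition 5.11]{Schopieray2021} it is pointed since $p\neq5$. Hence $\cC$ is pointed.

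\emph{Step 2: $\cC'=\Vec$.}
Here $\cC$ is modular of dimension $2p$. We use the Galois action and the congruence representation of $\SL_2(\Z)$. The Frobenius--Schur exponent $N$ satisfies that its prime divisors lie among those of $\dim\cC=2p$, by the Cauchy theorem for modular categories. So $N\mid 2^a p^b$, and in fact the $T$-eigenvalues are $2^a p$-th roots of unity.

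The dimensions $d_i^2$ lie in $\Q(\zeta_N)$, and $\dim\cC/d_i^2$ is an algebraic integer. Consider the Galois orbit of the unit object. If all $d_i$ are rational, then $d_i^2\mid 2p$, so $d_i\in\{1\}$ (note $d_i^2=2$, $p$ or $2p$ are non-square). Counting then forces a pointed category. Otherwise there is a nontrivial Galois orbit. Its size divides $[\Q(\zeta_N):\Q]$, and the sum $\sum_{\sigma} \sigma(\dim\cC/d_i^2)$ forces constraints.

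Then we use the Frobenius--Perron dimension argument:
\begin{itemize}
\item $\cC$ is (pseudo-)unitary up to Galois conjugation, and $\FPdim\cC=2p$.
\item If $\cC$ is integral, its simple dimensions satisfy $d^2\mid 2p$, so $d=1$ and $\cC$ is pointed.
\item If $\cC$ is not integral, the Galois orbit of $\one$ has size $k\ge2$, and $k\mid \rank$-type constraints apply. The dimension $2p/\!\dim$ of a Galois conjugate must equal some $d_j^2$. Writing $2p=\sum d_i^2$ over the orbit gives $\sum_\sigma \sigma(d^{-2})\cdot 2p$.
\end{itemize}
We would show, by the classification of modular categories whose dimension has few prime factors (e.g.\ the decomposition $\cC\cong\cC_{\pt}$-part times a non-pointed part, using that $\cC_{\ad}$ and $\cC_{\pt}$ are centralizers), that a non-integral factor must have dimension $p$ or $2$, contradicting the dimension-$p$ result and the fact that dimension $2$ is pointed.

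\emph{Main obstacle.} The main obstacle is the non-integral modular case in Step 2, namely excluding Galois-conjugate, non-pointed modular data of dimension $2p$. We would attack it through the congruence representation: its image factors through $\SL_2(\Z/2^a)\times\SL_2(\Z/p)$. Irreducible pieces of small dimension are then compared against the rank bound coming from $2p=\sum d_i^2$.
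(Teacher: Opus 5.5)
\textbf{Verdict: there is a genuine gap in both cases.} Neither case is actually proved.

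\textbf{Super-modular case.} Step~1 gives no argument for the splitting $\cC\cong\sVec\boxtimes\cD$, and that splitting is essentially the whole content of this case. Freeness of $X\mapsto f\otimes X$ on simple objects holds in \emph{every} super-modular category, because transparency of $f$ gives $\theta_{f\otimes X}=-\theta_X$; it has nothing to do with $\dim(\cC)/2$ being odd. Freeness also does not imply splitting. For example, $PSU(2)_6$ is super-modular of rank $4$ but non-split, since $2+\sqrt2$ is not the dimension of any rank-$2$ modular category. The ``known splitting result'' you invoke has a hypothesis that is never satisfied: $\theta_f=-1$ occurs among the twists of $\mathcal Z(\cC)$, so $\FSexp(\cC)$ is always even. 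I know of no theorem that a super-modular category with $\dim(\cC)/2$ an odd integer splits.

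The paper does not go through splitting. Instead it:
\begin{itemize}
\item shows that a non-pointed $\cC$ has $\K\subseteq\Q(\zeta_p)^+$ with $2\le h<\sqrt p$ and $hm\le p$, where $m$ is the number of $X\in\I_0$ with $d_X^2=1$;
\item builds a congruence representation of $\Gamma_\theta$ from $\sVec'_{\mathcal Z(\cC)}\cong\cC\boxtimes_{\sVec}\cC^{\mathrm{rev}}$, which gives Galois symmetry for the squared twists and shows that some $\theta_X^2$ has order divisible by $p$;
\item observes that the resulting Galois orbit of $X$ has length $p^{e-1}(p-1)/2$, and the trace bound of Lemma~\ref{lemmatrace} forces $d^2=1$ along it;
\item concludes $m\ge(p+1)/2$, contradicting $hm\le p$.
\end{itemize}

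\textbf{Modular case.} The claim that $\cC$ is pseudo-unitary up to Galois conjugation, with $\FPdim(\cC)=2p$, is false in general. For $p=5$, take $\cB\boxtimes\cC(\mathfrak{sl}_2,3)_\ad\boxtimes\cC(\mathfrak{sl}_2,3)_\ad^\sigma$ with $\cB$ a semion category. It is modular of global dimension $10$, but every Galois conjugate has the same fusion rules and hence $\FPdim=15+5\sqrt5\neq 10$. Your sketch uses $p>5$ only through the dimension-$p$ result, so an argument of this shape would prove a false statement at $p=5$.

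The centralizer decomposition $\cC\cong\cC_\pt\boxtimes\cC_\ad$ only handles the case $\cC_\pt\neq\Vec$, which is Lemma~\ref{lem:perfect}. The remaining case, $\cC_\pt=\Vec$ and $\cC_\ad=\cC$ with irrational dimensions, is exactly what you call the ``main obstacle'' and leave open. The ingredients you are missing there are:
\begin{itemize}
\item the level bound $\operatorname{ord}(t)\mid 8p$, so $\rho_\cC$ factors through $\SL_2(\Z_8)\times\SL_2(\Z_p)$;
\item the automorphism with $a\equiv5\pmod 8$, $a\equiv1\pmod p$ negates $\sqrt{2p}$, so $\rho(5I_2,I_2)=-I$; hence every $\SL_2(\Z_8)$-constituent has even dimension and every $\SL_2(\Z_p)$-constituent has dimension at most $p$;
\item the anticommuting unitaries $t^m$ and $st^ms^{-1}$, tested on the unit vector supported on the Galois orbit of $\one$, give $h^2\le p$;
\item this forces $t_\one$ to have trivial $p$-part, so $e_\one$ is fixed by the upper unipotent subgroup of $\SL_2(\Z_p)$; the Borel-induction Lemma~\ref{lemmaborel} then makes the square torus fix $e_\one$, giving $[\K:\Q]\le 2$;
\item the case $\K=\Q(\sqrt p)$ is finished by an explicit integer analysis together with the simplicity of the formal codegree $\FPdim(\cC)$ (Lemma~\ref{lemmaFPdimension}).
\end{itemize}
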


The paper is organized as follows. In Section \ref{preliminaries}, we recall some basic notions and notations of   (pre-modular) fusion categories, such as  global dimensions, formal codegrees, modular data,  etc. In Section \ref{sectionsuper}, we show that a super-modular fusion category of global dimension $2p$ is pointed (see Theorem \ref{supermodularpointed}), where $p>5$ is a prime. In Section \ref{sectionmodular}, for a modular fusion category $\cC$ of global dimension $2p$, we show $\cC$ is also pointed in Theorem \ref{modularpointed}.
\section{Preliminaries}\label{preliminaries}
\subsection{Spherical fusion categories}
Let $\cC$ be a fusion category,   $\cO(\cC)$  the set of isomorphism classes of simple objects of  $\cC$, the cardinal of $\cO(\cC)$ is called the rank (denote by $\rank(\cC)$) of $\cC$. Then the Frobenius-Perron homomorphism $\FPdim(-):\Gr(\cC)\to\C$ is the unique ring homomorphism such that $\FPdim(X)\geq1$ for all  $X$, $\FPdim(X)$ is an algebraic integer and is defined as the Frobenius-Perron dimension of the object $X$. The sum
\begin{align*}
\FPdim(\cC):=\sum_{X\in\Q(\C)}\FPdim(X)^2
\end{align*}
is called  the Frobenius-Perron dimension of fusion category $\cC$.

The  subcategory tensor generated by $X\otimes X^*$ for all $X\in\cO(\cC)$ is called the adjoint fusion subcategory $\cC_\ad$ of  $\cC$. 
A simple object of $\cC$ is called invertible if $X\otimes X^*=\one$, the unit object, equivalently, $\FPdim(X)=1$. A fusion category $\cC$ is pointed if $\cO(\cC)$ is a finite group, where the group multiplication is   the tensor product $\otimes$. In the following, we use $\cC_\pt$ to denote the maximal pointed subcategory of $\cC$, i.e., the fusion subcategory generated by invertible simple objects of $\cC$.  

Let $\cC$ be a spherical fusion category with a spherical structure $j$, that is, $j$ is a natural isomorphism from identity tensor functor $\text{id}_\cC$ to the double dual tensor functor $(-)^{**}$ such that $\dim_j(X)=\dim_j(X^*)$, $\forall X\in\cC$, where   $\dim_j(X)$ is the the dimension of an object $X$ is defined by the (categorical) trace of $\text{id}_X$:
\begin{align*}
\dim_j(X)=\text{Tr}(\text{id}_X):=\text{ev}_X\circ(j_X\circ \text{id}_X)\otimes \text{id}_{X^*}\circ\text{coev}_X.
\end{align*}
Then we define the global (or categorical) dimension of the spherical fusion category $\cC$ as
\begin{align*}
\dim(\cC):=\sum_{X\in\cO(\cC)}\dim_j(X)^2.
\end{align*}
Then $\dim_j(-)$ induces a homomorphism from $\Gr(\cC)$ to $\cC$
In this paper, we will fix a spherical structure $j$ and denote $\dim_j(-)$ by $\dim(-)$. For more properties of global dimension, see \cite{EGNO,ENO,Ostrik}. 
 
Notice that the complexified ring $\Gr(\cC)\otimes_\Z\cC$ of a fusion category $\cC$ is semisimple. Given an irreducible representation $\chi$ of $\Gr(\cC)\otimes_\Z\cC$,  the element $
\alpha_\chi=\sum_{X\in\cO(\cC)}\text{tr}_\chi(X)X^*$
is central, where $\text{tr}_\chi$ is the ordinary trace function on the representation $\chi$. Moreover, $\chi'(\alpha_\chi)=0$ if $\chi\ncong\chi'$ and $f_\chi:=\chi(\alpha_\chi)$ is a positive algebraic integer \cite{Lusztig}, $f_\chi$ is called a formal codegree of fusion category $\cC$ \cite{Ostrik2009}. For example, $\FPdim(\cC)$ and $\dim(\cC)$ are formal codegrees of $\cC$. It follows from \cite[Corollary 2.14]{Ostrik2015} that $\frac{\dim(\cC)}{f_\chi}$ is always a totally positive algebraic integer.

\begin{lemma}\label{lemmaFPdimension}
Let $\cC$ be a fusion category such that $\cC_{\ad}=\cC$, then the formal codegree  $\FPdim(\cC)$ has multiplicity one.
\end{lemma}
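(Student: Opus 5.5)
The plan is to realize the formal codegrees as eigenvalues of one nonnegative integer matrix, namely left multiplication by the Casimir element $R:=\sum_{X\in\cO(\cC)}X\otimes X^*$ on $\Gr(\cC)$. Then I apply the Perron–Frobenius theorem to that matrix. The hypothesis $\cC_\ad=\cC$ is used only to make the matrix primitive.

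\emph{Step 1 (the Casimir element).} I first recall that $R$ is central in $\Gr(\cC)$; this is the standard fact that $\sum_X X\otimes X^*$ is central in a fusion ring. For each irreducible representation $\chi$ of $\Gr(\cC)\otimes_\Z\C$ of dimension $d_\chi$, I then claim that $\chi(R)=f_\chi\cdot \mathrm{id}$. Since $R$ is central, $\chi(R)$ is a scalar. To identify it, I would use the Schur-type orthogonality relation for fusion rings,
\[
\sum_{X}\chi(X)A\chi(X^*)=\frac{f_\chi}{d_\chi}\Tr(A)\,\mathrm{id}
\quad\text{for all } A\in\mathrm{End}(V_\chi).
\]
This is the analogue of $\sum_g\rho(g)A\rho(g)^{-1}=\frac{|G|}{d}\Tr(A)$ and follows from the definition of $\alpha_\chi$ together with $\chi'(\alpha_\chi)=0$ for $\chi'\ncong\chi$, as in \cite{Lusztig,Ostrik2009}. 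Taking $A=\mathrm{id}$ gives $\chi(R)=f_\chi$. For the character $\FPdim$ this gives $\FPdim(R)=\FPdim(\cC)$, consistent with the codegree $\FPdim(\cC)$.

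\emph{Step 2 (regular representation).} Let $L_R$ be the matrix of left multiplication by $R$ in the basis $\cO(\cC)$; it has nonnegative integer entries. By semisimplicity, the regular representation decomposes as $\bigoplus_\chi V_\chi^{\oplus d_\chi}$. Hence the eigenvalue $f_\chi$ of $L_R$ occurs with multiplicity at least $d_\chi\ge1$ for each $\chi$, and distinct $\chi$ contribute disjoint blocks. So if $\FPdim(\cC)$ had multiplicity greater than one as a formal codegree, then $\FPdim(\cC)$ would be an eigenvalue of $L_R$ of algebraic multiplicity at least $2$. This holds whether the extra multiplicity comes from another $\chi$ or from $d_\chi>1$, although the $\FPdim$ character itself is $1$-dimensional.

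\emph{Step 3 (primitivity; the main point).} Now I use $\cC_\ad=\cC$. Every simple $Z$ appears in some product $X_1X_1^*X_2X_2^*\cdots X_nX_n^*$, because these objects tensor generate $\cC_\ad=\cC$. Since $\one$ is a summand of $XX^*$, once $Z$ appears in such a product of length $n$ it appears in products of every larger length; also $\cO(\cC)$ is finite. Hence there is an $N$ such that $L_R^N\cdot\one$ has all coefficients positive. Using $Y\in \one\cdot Y$ and $ZY^*\ni\dots$, or more simply the identity $R^N Y\supseteq R^N\cdot \one\cdot Y$ together with $Z\subseteq (ZY^*)Y$, one then shows that $L_R^{N'}$ is entrywise positive for some $N'$, so $L_R$ is primitive. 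By Perron–Frobenius, its spectral radius is a simple eigenvalue. The positive vector $(\FPdim(X))_X$ is an eigenvector with eigenvalue $\FPdim(R)=\FPdim(\cC)$, so $\FPdim(\cC)$ is the spectral radius and is simple. This contradicts Step 2, so the multiplicity is one.

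I expect the main obstacle to be Step 1: the precise identification of the scalar by which $R$ acts on $V_\chi$ with Ostrik's $f_\chi=\chi(\alpha_\chi)$, including the normalization by $d_\chi$. If the orthogonality identity proves awkward to justify, I would instead use the regular-representation trace formula $\delta_{X,\one}=\sum_\chi \Tr_\chi(X)/f_\chi$. Applying it to $XY^*$ yields the needed relation between $\sum_X \chi(X)\chi(X^*)$ and $f_\chi$. The positivity argument in Step 3 is routine once the adjoint-generation hypothesis is in place.
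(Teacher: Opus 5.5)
Your route is the paper's own: the Casimir element $R=\sum_X XX^*$, irreducibility of its fusion matrix from $\cC_\ad=\cC$, then Perron--Frobenius. However, Step 1 has the wrong normalization, and Step 2 depends on it. The correct relation is
\[
\sum_X\chi(X)\,A\,\chi(X^*)=f_\chi\,\Tr(A)\,\mathrm{id}_{V_\chi},
\]
not the version with $\frac{f_\chi}{d_\chi}$. You can see this two ways:
\begin{itemize}
\item In your own group analogy $f_\chi=|G|/d_\chi$, so $\frac{|G|}{d_\chi}\Tr(A)$ equals $f_\chi\Tr(A)$.
\item With your version, take $A=E_{ab}$ and sum the $(a,j)$-entries over $a$. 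This gives $\chi(\alpha_\chi)=\frac{f_\chi}{d_\chi}\,\mathrm{id}$, contradicting the definition $\chi(\alpha_\chi)=f_\chi$. Your fallback trace formula $\delta_{X,\one}=\sum_\chi \Tr_\chi(X)/f_\chi$ also yields the correct normalization.
\end{itemize}
Hence $\chi(R)=d_\chi f_\chi\,\mathrm{id}$. For a group, $R=|G|\cdot e$ acts by $|G|=d_\chi f_\chi$ on every irreducible. So the eigenvalues of $L_R$ are the numbers $d_\chi f_\chi$, each $\chi$ contributing multiplicity $d_\chi^2$, which is exactly what the paper states.

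As a result, your Step 2 claim fails in one case. You assert that a second occurrence of the codegree $\FPdim(\cC)$ always produces a second eigenvalue $\FPdim(\cC)$ of $L_R$. This is false when the occurrence comes from some $\chi'\neq\FPdim$ with $d_{\chi'}\ge 2$. Such a $\chi'$ contributes the eigenvalue $d_{\chi'}\FPdim(\cC)$, and simplicity of the Perron root says nothing about that.

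The missing ingredient is the paper's bound $d_\chi f_\chi\le\FPdim(\cC)$, and you already have it. Step 3 shows $\FPdim(\cC)$ is the spectral radius of $L_R$, since it has the positive eigenvector $(\FPdim X)_X$. So an eigenvalue $d_{\chi'}\FPdim(\cC)>\FPdim(\cC)$ is impossible, which forces $d_{\chi'}=1$. Only then does simplicity of the Perron root give the contradiction.

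With this repair the argument is complete. Your primitivity argument in Step 3 is correct, though irreducibility alone already suffices for simplicity of the Perron root.
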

\begin{proof}
Let $R=\sum_{X\in\cO(\cC)}XX^*$, its fusion matrix
$N_R=\sum_X N_XN_{X^*}$ is a positive definite matrix. The summands
$XX^*$ generate $\cC_{\ad}=\cC$, so $N_R$ is irreducible, and the  eigenvalues of $N_R$ are $\dim(\chi)\,f_\chi$, which is less than or equal to $\FPdim(\cC)$.  The Frobenius-Perron theorem
makes the  eigenvalue $\FPdim(R)=\FPdim(\cC)$ simple.
\end{proof}

\subsection{Modular and super-modular fusion categories}\label{subsectionmodsuper}
Let $\cC$ be a braided fusion category with braiding $c$ and $\cD$  a fusion subcategory of $\cC$. Two objects $X,Y$ are said to be centralize each other if
$c_{Y,X} c_{X,Y}=\text{id}_{X\otimes Y}.$
Then the centralizer $\cD_\cC'$ of $\cD$ in $\cC$ is the full subcategory of objects that centralize every object of $\cD$. We call $\cC':=\cC_\cC'$ the M\"{u}ger center of $\cC$ \cite{Mu1}.

Let $\cC$ be a braided spherical (i.e., pre-modular) fusion category with ribbon structure $\theta$. The matrices $S=(S_{X,Y^*})$ and $T=(\delta_{X,Y}\theta_X)$ are called the $S$-matrix and $T$-matrix of $\cC$, respectively, where $S_{X,Y}=\text{Tr}(c_{Y,X}c_{X,Y})$, $X,Y\in\cO(\cC)$.
A modular fusion category $\cC$ is a pre-modular fusion category such that the $S$-matrix $S$ is non-degenerate, equivalently $\cC'=\Vec$ \cite{DrGNO1,EGNO,Mu1}. Moreover, the  $S$-matrix of a modular fusion category $\cC$ determines the multiplication of the Grothendieck ring $\Gr(\cC)$ by the famous Verlinde formula \cite{EGNO} which states that for any $X,Y,Z\in\cO(\cC)$,
\begin{align}\label{Verlinde}
N_{X,Y}^Z:=\dim_\C(\text{Hom}(X\otimes Y,Z))=\frac{1}{\dim(\cC)}\sum_{W\in\cO(\cC)}\frac{S_{X,W}S_{Y,W}S_{Z^*,W}}{\dim(W)}.
\end{align}

Let $X$ be a simple object of a (pre-)modular fusion category $\cC$,  then $h_X(Y):=\frac{S_{X,Y}}{\dim(X)}$ defines a ring homomorphism from $\Gr(\cC)$ to $\C$, $\forall Y\in\cO(\cC)$. And the Verlinde formula (\ref{Verlinde})  also implies that  $\{h_X(-)|X\in\cO(\cC)\}$ is  the set of ring homomorphisms of modular fusion category $\cC$. And notice that the formal codegree determined by homomorphism $h_X(-)$ is $\frac{\dim(\cC)}{\dim(X)^2}$. Since $\sigma\circ h_X(-)$ is also a ring homomorphism  of $\Gr(\cC)$, where $\sigma\in\Gal(\overline\Q/\Q)$,  there is a unique simple object $Y$ such that $\sigma\circ h_X(-)=h_Y(-)$. Hence, there is a unique  permutation $\hat\sigma$ of $\cO(\cC)$ such that $\hat\sigma(X)=Y$ and $\sigma\circ h_X(-)=h_{\hat\sigma(X)}(-)$. We call the subset
$\cO_X:=\{Y\in\cO(\cC)|Y=\hat\sigma(X) ~\text{for some $\sigma\in\Gal(\overline\Q/\Q)$}\}$
 the Galois orbit of  $X$.

The modular data $(S,T)$ of a modular fusion category $\cC$ is also connected closely with the congruence representations of the modular group $\SL_2(\Z)$ \cite{BNRW,DLN}, which  is generated by $\hxs=\left(
                                                      \begin{array}{cc}
                                                        0 & -1 \\
                                                        1 & 0 \\
                                                      \end{array}
                                                    \right)
$ and $\hxt=\left(
              \begin{array}{cc}
                1 & 1 \\
                0 & 1 \\
              \end{array}
            \right)
$ with relations $\hxs^4=1$ and $(\hxs\hxt)^3=\hxs^2$. Specifically, the morphism $\hxs\mapsto\frac{1}{\sqrt{\dim(\cC)}}S$, $\hxt\mapsto T$ defines a projective representation of $\SL_2(\Z)$ \cite[Theorem 8.16.1]{EGNO}, where $\sqrt{\dim(\cC)}$ is the positive square root of $\dim(\cC)$.
Let $\tau^\pm(\cC):=\sum_{X\in\cO(\cC)}\dim(X)^2\theta_X^\pm$
be the Gauss sums of $\cC$ \cite{DrGNO1,EGNO}, then $\xi(\cC):=\frac{\tau^+(\cC)}{\sqrt{\dim(\cC)}}$ is called the multiplicative central charge of $\cC$. It follows from \cite{BNRW,DLN} that there always exists a third root $\xi$ of  $\xi(\cC)$ such that  $s:=\rho_\cC(\hxs)=\frac{1}{\sqrt{\dim(\cC)}}S$ and $t:=\rho_\cC(\hxt)=\frac{1}{\xi}T$ defines a finite-dimensional congruence representation $\rho_\cC$ of $\SL_2(\Z)$ of level $N$,  that is, $\ker(\rho_\cC)$ contains a congruence subgroup $\Gamma(N)$ of level $N=\text{ord}(t)$. Moreover, $\Q(S)\subseteq\Q(T)\subseteq\Q(t)$
\cite[Theorem II]{DLN}. Let $t_X:=\theta_X/\xi$, the normalized ribbon structure of the simple object $X$,  $\forall X\in\cO(\cC)$, then we have the Galois symmetry \cite[Theorem II]{DLN}, that is, $\sigma^2(t_X)=t_{\hat\sigma(X)}$, $\forall \sigma\in\Gal(\Q(t)/\Q)$.

A pre-modular fusion category $\cC$ is super-modular if $\cC'=\sVec$.  Let $f$ be the nontrivial simple object of $\sVec$. Then tensor product of $f$ induces a partition on $\cO(\cC)=\mathrm{I}_0\cup\mathrm{I}_1$, $\mathrm{I}_1=\{Y|Y\cong f\otimes X, X\in\mathrm{I}_0\}$ and $\one\in \mathrm{I}_0$. The $S$-matrix of $\cC$ has the form $\left(\begin{array}{cc}
    \hat{S} & \varepsilon\hat{S} \\
    \varepsilon\hat{S} & \hat{S}
\end{array}\right)$ with $\hat{S}$ being a non-degenerate symmetric   matrix, where $\varepsilon:=d_f\in\{\pm1\}$. Moreover, 
$\hat S^2=\frac{\dim(\cC)}{2}C,  (C)_{XY}=\delta_{X,Y^*},
$
and the   characters
 $h_X(Y):=\frac{\hat S_{X_i,X_j}}{d_{X_j}}$
are also permuted by Galois automorphisms, namely, $\sigma\circ h_X(-)=h_{\hat{\sigma}(X)}(-)$, see \cite{BGNPRW} for details. And it further proved in \cite[Corollary 3.4]{Yu2020} that $\frac{\dim(\cC)}{2d_X^2}$ is an algebraic integer, $\forall X\in\mathrm{I}_0$.

However, $(\hat{S},\hat{T})$ does  not admits congruence representation of $\SL_2(\Z)$ in general, where $\hat{T}=(\delta_{X,Y}\theta_X)$. Indeed, if $\cC$ admits a minimal modular extension (see \cite{Mu1}), then it was proved in \cite[Theorem 3.1]{BRWZ} that there exists a projective congruence representation $\hat{\rho}:\Gamma_\theta\to \SL_r(\C)$of level $N=\ord(T)$, where \begin{align*}
 \Gamma_\theta:=\langle \hxs,\hxt^2\rangle= \bigg\{\left(\begin{array}{cc}
    a & b \\
    c & d
\end{array}\right)\in\SL_2(\Z)\bigg| ac\equiv bd\equiv 0\pmod 2\bigg\},\quad\hat{\rho}:\hxs\mapsto\hat{S},\hxt^2\mapsto\hat{T}^2.
\end{align*}

In the following, for a fusion category $\cC$, we always use the following notations:  
$\Q(\zeta_N)^+=\Q(\zeta_N+\zeta_N^{-1})$, 
$\K=\Q(d_X:X\in\cO(\cC)), h=[\K:\Q]$, $V_\cC:=\C^{\cO(\cC)}$ with its standard inner product and basis 
 $\{e_X|X\in\cO(\cC)\}$. For any Galois extension $L/\Q$, we denote the norm and trace of an algebraic integer $\alpha\in L$ by $N_{L/\Q}(\alpha)=\Pi_{\sigma\in\Gal{L/\Q}}\sigma(\alpha)$ and $\Tr_{L/\Q}(\alpha)=\sum_{\sigma\in\Gal{L/\Q}}\sigma(\alpha)$, respectively.

\section{Super-modular fusion category of global dimension $2p$}\label{sectionsuper}

The following lemma is easy, we include it for later use.
\begin{lemma}\label{lemmadimension}
     Let $\cC$ be a pre-modular fusion category of global dimension being a square-free integer $d$. If its  M\"{u}ger center $\cC'\subseteq\sVec$ and $\dim(X)\in\Z$ for all $X\in\cO(\cC)$, then $\cC$ is pointed.
\end{lemma}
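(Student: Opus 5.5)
We plan to prove this in two steps. First we show $\dim(X)=\pm1$ for every simple $X$, using the divisibility results recalled in Section \ref{subsectionmodsuper}. Then we pass from categorical dimensions to Frobenius--Perron dimensions by comparing $\dim(\cC)$ with $\FPdim(\cC)$. We split according to $\cC'=\Vec$ or $\cC'=\sVec$.

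\emph{Step 1: modular case, $\cC'=\Vec$.} For each $X\in\cO(\cC)$ the character $h_X$ has formal codegree $f=\frac{\dim(\cC)}{\dim(X)^2}$. Formal codegrees are algebraic integers, so $d/\dim(X)^2$ is an algebraic integer. It is also rational, hence it lies in $\Z$. Thus the integer $\dim(X)^2$ divides the square-free integer $d$, which forces $\dim(X)^2=1$.

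\emph{Step 1: super-modular case, $\cC'=\sVec$.} By \cite[Corollary 3.4]{Yu2020}, $\frac{d}{2\dim(X)^2}$ is an algebraic integer, hence an integer, for all $X\in\I_0$. So $\dim(X)^2$ divides the square-free integer $d/2$, and again $\dim(X)^2=1$. Every $Y\in\I_1$ has the form $f\otimes X$ with $X\in\I_0$, so $\dim(Y)=\varepsilon\dim(X)=\pm1$. In both cases every simple object has $\dim(X)^2=1$, and therefore $\rank(\cC)=\dim(\cC)=d$.

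\emph{Step 2: from $\dim$ to $\FPdim$.} Knowing $\dim(X)=\pm1$ does not by itself make $X$ invertible, because summands of $X\otimes X^*$ with negative dimension could cancel. To handle this, I will compare the two global dimensions. We have $\FPdim(\cC)=\sum_X\FPdim(X)^2\ge\rank(\cC)=d$, with equality if and only if $\FPdim(X)=1$ for all $X$, that is, if and only if $\cC$ is pointed. Next I invoke \cite[Proposition 8.22]{ENO}: $\alpha:=\dim(\cC)/\FPdim(\cC)$ is an algebraic integer all of whose Galois conjugates lie in $(0,1]$. The norm of $\alpha$ is then a positive integer equal to a product of numbers in $(0,1]$. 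Hence every conjugate equals $1$, in particular $\alpha=1$, and so $\FPdim(\cC)=d=\rank(\cC)$. It follows that every simple object has Frobenius--Perron dimension $1$, and $\cC$ is pointed.

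The main point of care is Step 2. The divisibility arguments only control $|\dim(X)|$, not $\FPdim(X)$, so the argument depends on the Galois-conjugate bound for $\dim(\cC)/\FPdim(\cC)$. If one prefers to stay within the results quoted in the paper, Ostrik's result that $\dim(\cC)/\FPdim(\cC)$ is a totally positive algebraic integer can replace the first half of that bound. One then still needs that each conjugate is at most $1$. This holds because $|\sigma(\FPdim\cC)|\ge\dim(\cC)$: indeed $\sigma(\FPdim\cC)=\sum_X\sigma(\FPdim X)^2$ is again a formal codegree, and $d$ is the smallest value that formal codegrees of $\cC$ can take here.
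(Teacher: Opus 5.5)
\textbf{There is a genuine gap in Step 2.} Your Step 1 is correct and matches the paper's first step. The paper gets $\dim(X)^2\mid d$ from the same two divisibility results, concludes $\rank(\cC)=d$, and then simply cites Ostrik for ``$\rank(\cC)=\dim(\cC)$ implies pointed.'' Step 2 is your attempt to prove that last implication, and it does not work as written.

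First, \cite[Proposition 8.22]{ENO} only says that $\alpha=\dim(\cC)/\FPdim(\cC)$ is an algebraic integer with $\alpha\le 1$. That is a bound on $\alpha$ itself, not on its Galois conjugates. The conjugate bound follows only when $\FPdim(\cC)\in\Q$, which is how weakly integral categories are shown to be pseudo-unitary. In general it fails. For the Galois conjugate of $\cC(\mathfrak{sl}_2,3)_\ad$ of dimension $\frac{5-\sqrt5}{2}$ one gets $\alpha=\frac{3-\sqrt5}{2}$, whose conjugate $\frac{3+\sqrt5}{2}$ exceeds $1$. Your version would make every spherical fusion category pseudo-unitary.

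Second, your fallback needs $\sigma(\FPdim(\cC))\ge d$, justified by ``$d$ is the smallest formal codegree here,'' which you never prove.
\begin{itemize}
\item In the modular case the claim holds. Every character of $\Gr(\cC)$ is some $h_X$, with codegree $d/\dim(X)^2=d$. So in fact $\FPdim(\cC)=d$ directly, with no Galois argument needed.
\item In the super-modular case, the $h_X$ with $X\in\I_0$ account only for the $d/2$ characters with $\chi(f)=\varepsilon$. The other $d/2$ characters are not rows of $\hat S$, and Step 1 says nothing about their codegrees.
\item When $\varepsilon=-1$, the character $\FPdim$ (which has $\FPdim(f)=1$) is one of these uncontrolled ones. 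So the argument breaks exactly there.
\end{itemize}

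\textbf{What is missing is an upper bound on formal codegrees, not a lower one.} By the result underlying \cite[Corollary 2.14]{Ostrik2015}, each formal codegree has the form $f_\chi=\dim(\cC)/\dim(Z_\chi)$. Here $Z_\chi$ is a simple summand of the induced object $I(\one)$ in $\mathcal Z(\cC)$, and $\dim(Z_\chi)$ equals the dimension of its image under the forgetful functor to $\cC$. Every simple object of $\cC$ has dimension $\pm1$, and both $\dim(\cC)$ and $f_\chi$ are positive, so $\dim(Z_\chi)$ is a positive integer. Hence $f_\chi\le d$. Taking $\chi=\FPdim$ gives $\FPdim(\cC)\le d=\rank(\cC)\le\FPdim(\cC)$, which forces $\FPdim(X)=1$ for all $X$. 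This repairs Step 2 uniformly, with no Galois argument and no modular/super-modular case split.
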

\begin{proof}
    Notice that we have $d_X^2$ divide $d$ for all $X\in\cO(\cC)$ by \cite[Proposition 8.14.6]{EGNO} and \cite[Corollary 3.4]{Yu2020}, which implies $d_X=\pm 1$. Therefore, $\rank(\cC)=d$ and $\cC$ is pointed by \cite[Remark 4.2.3]{Ostrik}.
\end{proof}

\begin{lemma}
    Let $\cC$ be a pre-modular fusion category of global dimension $2p$, where $p>5$ is a  prime. If its  M\"{u}ger center $\cC'$ is non-trivial, then $\cC'=\sVec$, $\cC$ is pointed,  or $\cC\cong\text{Rep}(D_{2p})$ as tensor category.
\end{lemma}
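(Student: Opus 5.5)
Let $\cE:=\cC'$ be the M\"uger center. It is a symmetric fusion subcategory of $\cC$, so $\dim(\cE)$ divides $\dim(\cC)=2p$ as an algebraic integer. The possibilities are $\dim(\cE)\in\{2,p,2p\}$. By Deligne's theorem, $\cE\simeq\text{Rep}(G,z)$ for a finite group $G$, with $\dim(\cE)=|G|$. Every symmetric category has integer dimensions, and spherical symmetric fusion categories are pseudo-unitary in the appropriate sense. I would first dispose of each case according to $|G|$.

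\emph{Case $|G|=2$.} Then $\cE$ is either $\sVec$ or $\text{Rep}(\Z_2)$. If $\cE=\sVec$, we are in the first alternative. If $\cE=\text{Rep}(\Z_2)$ is Tannakian, de-equivariantize to get $\cC_{\Z_2}$, a braided fusion category of global dimension $p$ whose M\"uger center is trivial, i.e.\ a modular category of dimension $p$. Since $p\neq5$, \cite[Proposition 5.11]{Schopieray2021} shows it is pointed, so all its simple objects have dimension $\pm1$. Then $\cC$ is a $\Z_2$-equivariantization of a pointed category of dimension $p$ (odd). Simple objects of $\cC$ have dimensions $1$ or $2$. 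The $\Z_2$-action on $\Z_p$ has either no or $p-1$ moved points, so the dimensions in $\cC$ are integers. Alternatively, argue directly that $\dim(X)^2$ divides $2p$, which forces $\dim(X)=\pm1$ for integer $\dim(X)$. Integrality of $\dim(X)$ comes from equivariantization of a pointed category. Hence $\cC$ is pointed, in the spirit of Lemma \ref{lemmadimension}.

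\emph{Case $|G|=p$.} Since $p$ is odd, $z$ is trivial and $\cE=\text{Rep}(\Z_p)$ is Tannakian. Then $\cC_{\Z_p}$ is modular of dimension $2$, hence pointed with two invertible objects. Equivariantizing by $\Z_p$, which has odd order and acts on a set of size $2$, fixes both points. So $\cC$ has simple objects of dimension $1$ only (the $p$-th root data give $2p$ invertibles), and $\cC$ is pointed.

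\emph{Case $|G|=2p$.} Here $\cC=\cE$ is symmetric, $\cC\simeq\text{Rep}(G,z)$ with $|G|=2p$, so $G\in\{\Z_{2p},D_{2p}\}$. If $G=\Z_{2p}$, $\cC$ is pointed. If $G=D_{2p}$, the center of $D_{2p}$ is trivial for odd $p$, forcing $z=1$, so $\cC\cong\text{Rep}(D_{2p})$ as tensor categories.

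The main obstacle is making the equivariantization steps rigorous. The cleanest route I would try is a uniform argument. Whenever $\cE$ contains a Tannakian $\text{Rep}(H)$ with $H\neq1$, all simple dimensions of $\cC$ are integers. The reason is that $\cC_H$ is pointed (dimension $p$ or $2$, using Schopieray's result for $p$), and equivariantization only multiplies dimensions by orbit sizes, which are integers. Then apply the divisibility $d_X^2\mid 2p$ from \cite[Proposition 8.14.6]{EGNO}, or argue directly via the equivariantization, to conclude pointedness.

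One caveat: when $\cC'$ is not all of $\cC$, the divisibility $d_X^2\mid\dim(\cC)$ needs care, because the standard proof uses non-degeneracy. So I would instead count directly. Simple objects of $\cC$ have dimension equal to an orbit size times the dimension of a projective representation of the stabilizer. With the orbit sizes and stabilizers above, every dimension is $1$ except possibly in the $|G|=2p$ case, which is handled by group theory.
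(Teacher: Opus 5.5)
\textbf{There is a genuine gap in your case $|G|=2$ with $\mathcal{E}=\text{Rep}(\Z_2)$ Tannakian.} There you conclude that $\cC$ is pointed, and this is false in general.

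The de-equivariantization $\cC_{\Z_2}\simeq\cC(\Z_p,\eta)$ is correct. However, the $\Z_2$-action by braided autoequivalences acts on $\Z_p$ through $O(\Z_p,\eta)=\{\pm1\}$, and inversion $x\mapsto -x$ is allowed because $\eta(-x)=\eta(x)$. As you noticed yourself, inversion moves $p-1$ points. So $\cC\simeq\cC(\Z_p,\eta)^{\Z_2}$ then has two invertible objects and $(p-1)/2$ simple objects of dimension $2$.

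Neither of your ways of ruling these out works:
\begin{itemize}
\item The divisibility $d_X^2\mid\dim(\cC)$ (EGNO, Proposition 8.14.6; or Yu2020, Corollary 3.4 for $\dim(\cC)/2$) requires $\cC'\subseteq\sVec$. That is exactly what fails when $\cC'=\text{Rep}(\Z_2)$, and indeed here $d_X^2=4\nmid 2p$.
\item Your closing orbit count, asserting that every dimension is $1$ outside the symmetric case, ignores the orbits of size $2$.
\end{itemize}
This case really occurs with $\cC$ non-symmetric. The adjoint subcategory of the metaplectic category $SO(p)_2$ is pre-modular of dimension $2p$. Its M\"uger center is the $\text{Rep}(\Z_2)$ generated by the boson, and it has the fusion rules of $\text{Rep}(D_{2p})$.

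\textbf{The repair.} The paper writes $\cC\cong\cC(\Z_r,\eta)^{\Z_s}$, ``which is pointed or equivalent to $\text{Rep}(D_{2p})$''; to justify this, split according to the action:
\begin{itemize}
\item If $\Z_2$ fixes all objects, every stabilizer is $\Z_2$, whose projective representations are one-dimensional, so $\cC$ is pointed.
\item If $\Z_2$ acts by inversion, then as a tensor category $\cC\simeq\Vec_{\Z_p}^{\Z_2}\simeq\text{Rep}(\Z_p\rtimes\Z_2)=\text{Rep}(D_{2p})$. This holds because, for odd $p$, the associator of $\cC(\Z_p,\eta)$ is trivial, and the cohomology of $\Z_2$ with coefficients of odd order $p$ that classifies twistings of the action vanishes.
\end{itemize}
So the $\text{Rep}(D_{2p})$ alternative in the lemma comes from the Tannakian $\Z_2$ case as well as from the symmetric case.

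The remaining parts of your proposal agree with the paper's argument:
\begin{itemize}
\item the divisibility $\dim(\cC')\mid 2p$;
\item the case $|G|=p$, where $\Z_p$ must fix both objects of the rank-$2$ pointed de-equivariantization;
\item the symmetric case $|G|=2p$.
\end{itemize}
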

\begin{proof}
Notice that $\cC'$ is super-Tannakian and $\dim(\cC')\mid \dim(\cC)$ by \cite[Theorem 3.1]{Yu2022}, so $\dim(\cC')=2,p,2p$. If $\dim(\cC')=2p$, then $\cC$ is symmetric, so $\cC$ is either pointed or tensor equivalent to $\text{Rep}(D_{2p})$. Assume $\dim(\cC')$ is a prime, if $\cC'$ is Tannakian, then the de-equivariantization by the Tannakian subcategory is a pointed fusion category of global dimension being a prime \cite[Proposition 5.11]{Schopieray2021}, so $\cC\cong \cC(\Z_r,\eta)^{\Z_s}$, where these two distinct primes $r,s\in\{2,p\}$, which is pointed or equivalent to $\text{Rep}(D_{2p})$. 
\end{proof}
So, in order to give a complete classification of pre-modular fusion categories of global dimension $2p$, we only need to solve the cases when $\cC'\subseteq\sVec$, i.e. $\cC$ is modular or $\cC'$ is super-modular.  Notice that pre-modular fusion categories  of global dimension $10$ are  classified in \cite{Yu2022}, so we assume $p>5$ below. 

We first classify super-modular fusion category $\cC$ such that $\frac{\dim(\cC)}{2}$ has square-free norm. Let $\cC(\mathfrak{sl}_2,3)_\ad$ be the Fibonacci modular fusion category of dimension $\frac{5+\sqrt{5}}{2}$, which has rank $2$, see \cite{EGNO}.
\begin{proposition}\label{propositionsquarefree}
    Let $\cC$ be a  super-modular fusion category, and let $d$ be a positive square-free integer. If $\dim(\cC)\not\in\Z$ and $N_{\Q(\dim(\cC))/\Q}(\dim(\cC)/2)=d$. Then $\K=\Q(\dim(\cC))=\Q(\sqrt{5})$,  $\rank(\cC)=4$ and $d=5$. Moreover, $\cC\cong\sVec\boxtimes \cD$, where $\cD$ is a Galois conjugate of $\cC(\mathfrak{sl}_2,3)_\ad$.
\end{proposition}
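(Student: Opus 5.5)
The strategy is to use the Galois action on the characters $h_X$ of the reduced $S$-matrix, combined with the integrality of $\frac{\dim(\cC)}{2d_X^2}$ from \cite[Corollary 3.4]{Yu2020}. Put $D=\dim(\cC)/2$ and $F=\Q(D)$, so that $[F:\Q]=n\geq 2$ and $N_{F/\Q}(D)=d$ is square-free.

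\textbf{Step 1: $D/d_X^2$ is a unit or an associate of $D$.} For each $X\in\I_0$, the number $D/d_X^2$ is a totally positive algebraic integer that divides $D$. Taking norms from a Galois closure and using that $d$ is square-free, we get $N(d_X^2)=N(d_X)^2$ dividing $d$, hence $N(d_X)=\pm1$. So each $d_X$ is a unit, and each $D/d_X^2$ is an associate of $D$.

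\textbf{Step 2: $\K$ is a real quadratic field.} Galois conjugation permutes the characters $h_X$, and the formal codegree attached to $h_X$ is $D/d_X^2$ (up to the factor coming from $\I_1$). Hence $\sigma(D)=D/d_{\hat\sigma(\one)}^2$ for every $\sigma$. In particular, the Galois conjugates of $D$ all occur among the numbers $D/d_X^2$, and $\sum_{X\in\I_0}d_X^2=D$.

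Since $D\notin\Z$, some $\sigma$ moves $D$, so some $d_X$ with $X=\hat\sigma(\one)$ is irrational. Applying $\sigma$ to the identity $\sum_{Y}d_Y^2=D$ and dividing gives $\sum_Y (d_Y/d_X)^2 = D/d_X^2$, that is, the squared dimensions of the $\sigma$-conjugate pseudo-unitary-like structure. The approach here is to compare $\sum_X d_X^2=D$ with the norm. Every conjugate of $D$ has the form $D/u^2$ with $u$ a unit, and the product of all conjugates is $d$. Summing over the orbit of $\one$ gives $\sum_{\sigma}\sigma(D)^{-1}\leq \sum_X d_X^2/D=1$. By AM–GM, $n\,d^{-1/n}\leq 1$, so $d\geq n^n$.

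On the other hand, the square-freeness of $d=N(D)$ together with the units should force $F$ to be small. I would use Siegel/Schur-type trace bounds for totally positive integers: since $\Tr_{F/\Q}(D)\geq \ldots$ and $\sum_\sigma 1/\sigma(D)\leq 1$, one wants to rule out $n\geq 3$. This degree bound is the main obstacle. My first attempt would use the fact that $\K$ is abelian (contained in a cyclotomic field), combined with the inequality $\sum_\sigma 1/\sigma(D)\leq1$ and square-free norm, to show $n=2$ via the known classification of small-rank cases. Failing that, I would use the Galois orbit of $\one$: it has size $n$, and the positivity $D/d_X^2>0$ for every embedding forces each orbit element to have $d_X^2$ totally positive, which should pin $\K$ to be quadratic.

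\textbf{Step 3: the quadratic case.} Once $n=2$, write $D=a+b\sqrt m$ with $D\,\sigma(D)=d$ and $1/D+1/\sigma(D)\leq 1$, i.e. $\Tr(D)\leq d$. With the orbit of $\one$ being $\{\one,X\}$ and equality analysis, I would show $\I_0=\{\one,X\}$, with $d_X^2=d_X+1$ coming from the fusion rule $X^2=\one+X$ forced by the Verlinde-type formula for $\hat S$. This gives $\K=\Q(\sqrt5)$, $D=\frac{5\pm\sqrt5}{2}$, $d=5$ and $\rank(\cC)=4$.

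\textbf{Step 4: the splitting.} The subcategory generated by $X$ has dimension $D\neq \dim\cC$ and is Fibonacci, hence modular. Müger's decomposition then gives $\cC\cong\sVec\boxtimes\cD$ with $\cD$ a Galois conjugate of $\cC(\mathfrak{sl}_2,3)_\ad$.
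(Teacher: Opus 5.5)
\textbf{There is a genuine gap.} It sits exactly at the step you flag as the ``main obstacle'': nothing in your Step 2 bounds the degree.

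The inequality you have, $\sum_\sigma\sigma(D)^{-1}\le\sum_{X\in\I_0}d_X^2/D=1$, is the trivial direction, since each conjugate of $D$ equals $D/d_X^2$ for some $X\in\cO_\one$. Its AM--GM consequence $d\ge n^n$ bounds $d$ from below and says nothing about $n$, because $d$ is arbitrary. The alternatives you list (Siegel-type trace bounds, abelianness, total positivity) are not developed into an argument.

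The paper's proof rests on two ideas that are missing here.
\begin{itemize}
\item Because $N(D)=d$ is square-free and $D$ is a $d$-number, no rational integer $q\ge2$ divides $D$. Schopieray's argument then gives the \emph{reverse} inequality $\sum_\sigma\sigma(D)^{-1}\ge1$. Sandwiching the two inequalities forces $\I_0=\cO_\one$, hence $|\I_0|=h$ and $\rank(\cC)=2h$.
\item The matrix $A=(\sigma(d_X))_{\sigma,\,X\in\I_0}$ is now square, and orthogonality of $\hat S$ gives $AA^T=\operatorname{diag}(\sigma(D))_\sigma$. So $\Lambda=\bigoplus_{X\in\I_0}\Z d_X$ is a full lattice in $\cO_\K$ of discriminant $(\det A)^2=d$. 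Square-freeness forces $\Lambda=\cO_\K$, $\operatorname{disc}(\K)=d$ and $\sqrt d=\pm\det A\in\K$. The discriminant formula for the tower $\K\supseteq\Q(\sqrt d)\supseteq\Q$ then gives $d\ge d^{h/2}$, i.e.\ $h=2$.
\end{itemize}

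\textbf{Problems in your other steps.}
\begin{itemize}
\item Step 1 is not valid as written. $d_X$ lies in $\K$, which can a priori be larger than $F=\Q(D)$. Then $N_{\K/\Q}(D)=d^{[\K:F]}$ (and likewise the norm from a Galois closure) need not be square-free, so $N(d_X)=\pm1$ does not follow. That $d_X$ is a unit only comes for free once $\I_0=\cO_\one$, since then $d_X^2=D/\sigma(D)$.
\item The ``equality analysis'' in Step 3 presupposes $\sum_\sigma\sigma(D)^{-1}=1$. That is precisely the missing reverse inequality.
\item Even granting $\I_0=\{\one,X\}$ with $\K$ quadratic, arithmetic alone does not give $d=5$. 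Orthogonality gives $\sigma(d_X)=-1/d_X$, so $d_X=(t\pm\sqrt{t^2+4})/2$ with $t\in\Z$ and $N(D)=t^2+4$. This is square-free also for $t=3$, giving $d=13$. Pinning down the Fibonacci fusion rule needs categorical input: the classification of rank-$4$ super-modular categories [BGNPRW, Theorem 3.1], which the paper invokes. An unspecified Verlinde-type computation with $\hat S$ does not suffice.
\end{itemize}
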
 
\begin{proof}
 Notice that  $\frac{\dim(\cC)}{2}$ is   also a $d$-number by \cite[Lemma 2.7]{Ostrik2009}, since any $\sigma\in\Gal(\overline{\Q}/\Q)$,
\begin{align*}
\sigma\left(\frac{\dim(\cC)}{2}\right)\bigg/\frac{\dim(\cC)}{2}=\sigma(\dim(\cC))/\dim(\cC)
\end{align*}
is an algebraic unit. Thus, for any integer $q\in\Z_{\geq2}$, \cite[Corollary 4.3]{Schopieray2021} says that $q$ does not divide $\frac{\dim(\cC)}{2}$, and the argument of \cite[Theorem 4.4]{Schopieray2021} shows
 \begin{align*}
\sum_{\sigma\in\Gal(\Q(\dim(\cC))/\Q)}  \frac{1}{\sigma(\dim(\cC)/2)}
=\sum_{\sigma\in\Gal(\Q(\dim(\cC))/\Q)} \frac{2}{\sigma(\dim(\cC))}\geq 1.
\end{align*}
 Meanwhile $\frac{\dim(\cC)}{2}=\sum_{X\in\I_0}\dim(X)^2$, then   we obtain
 \begin{align*}
 \frac{1}{2}\leq \sum_{\sigma\in\Gal(\Q(\dim(\cC))/\Q)}\frac{1}{\sigma(\dim(\cC))} =    \sum_{X\in\cO_\one} \frac{1}{\frac{\dim(\cC)}{\dim(X)^2}}\leq    \sum_{X\in\I_0} \frac{1}{\frac{\dim(\cC)}{\dim(X)^2}}=\frac{1}{2},
\end{align*}
so $\I_0=\cO_\one$  and $\rank(\cC)=2[\K:\Q]$. 

Let $ A=\bigl(\sigma(d_X)\bigr)_{
       \sigma\in\Gal(\K/\Q),\,X\in \I_0}$.
For any $\sigma,\tau\in\Gal(\K/\Q)$, the orthogonality of $\hat{S}$ and Galois symmetry for $\cC$ in Subsection \ref{subsectionmodsuper} imply
$ \sum_{X\in \I_0}\sigma(d_X)\tau(d_X)
=\delta_{\sigma,\tau}\sigma\left(\frac{\dim(\cC)}{2}\right).$ Therefore
\begin{equation}\label{eq:Gram}
AA^{T}=\text{diag}\left(\sigma\left(\frac{\dim(\cC)}{2}\right)\right)_\sigma,
 \quad (\det A)^2=N_{\K/\Q}\left(\frac{\dim(\cC)}{2}\right)=d.
\end{equation}
 In particular,    $A$ is invertible, then the $\{d_X|X\in\I_0\}$ is linearly independent over $\Q$, which forms a $\Q$-basis of $\K$. Let $\Lambda=\sum_{X\in I}\Z d_X\subseteq\mathcal O_K$, then $\Lambda$ is a full lattice of algebraic integers, and its discriminant is
$ \text{disc}(\Lambda)=(\det A)^2=d$.
The discriminant--index formula \cite[Corollay 1.4.3]{Cohen} yields
$ d=\text{disc}(\K) [\mathcal O_\K:\Lambda]^2$, where $[\cO_\K:\Lambda]$ is the index of additive group $\Lambda$ in $\cO_\K$,
so $[\mathcal O_\K:\Lambda]=1$ and
$\text{disc}(\K)=d$ as $d$ is square-free.
Notice that $\K/\Q$ is Galois, every entry of $A$ belongs to
$\K$. Equation~\eqref{eq:Gram} shows that $\det A=\pm\sqrt d\in \K$, consequently,
$\Q(\sqrt d)\subseteq \K$.
Because $d>1$ is square-free,
\[
 \text{disc}(\Q(\sqrt d))=
 \begin{cases}
d,&d\equiv1\pmod4,\\
 4d,&d\not\equiv1\pmod4.
 \end{cases}
\]
The discriminant formula for the tower $\K/\Q(\sqrt d)/\Q$ \cite[Theorem 2.5.1]{Cohen} gives \begin{align*}
 \text{disc}(\K)=\text{disc}(\Q(\sqrt d))^{[\K:\Q(\sqrt d)]}
           N_{\Q(\sqrt d)/\Q}(\mathfrak d_{\K/\Q(\sqrt d)})   ,
\end{align*}
where $\mathfrak d_{\K/\Q(\sqrt d)}$ is the relative discriminant ideal and its
norm is a positive integer; see \cite[Section 2.2.4]{Cohen}.
Since $\text{disc}(\K)=d$ and $\text{disc}(\Q(\sqrt d))\ge d$, we obtain $
 d\ge d^{[\K:\Q(\sqrt d)]}=d^{h/2}$, so $h=2$. It follows from  \cite[Theorem 3.1]{BGNPRW} that  $\cC\cong\sVec\boxtimes \cD$, where $\cD$ is a modular fusion category of rank $2$, which then equivalent to a   Galois conjugate of $\cC(\mathfrak{sl}_2,3)_\ad$.
\end{proof}

\begin{proposition}\label{propositionfield}
Let $\cC$ be a super-modular fusion category of global dimension $2p$. Let $m$ be the cardinal of $\{X\in \I_0\mid d_X^2=1\}$. If $\cC$ is not pointed, then
$
 2\le h\le m,~ hm\le p,
 ~ h<\sqrt p.
$
\end{proposition}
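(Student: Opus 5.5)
The plan is to prove the bound $2\le h$ first, then $h\le m$, then $hm\le p$, and to deduce $h<\sqrt p$ at the end. Throughout I use only two facts from Subsection \ref{subsectionmodsuper}: the characters $h_X$, $X\in\I_0$, of the super-modular category are permuted by $\Gal(\overline\Q/\Q)$ via $\sigma\circ h_X=h_{\hat\sigma(X)}$, and the formal codegree attached to $h_X$ is $\frac{\dim(\cC)}{2d_X^2}=\frac{p}{d_X^2}$, which is an algebraic integer by \cite[Corollary 3.4]{Yu2020}.

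\emph{Step 1: $h\ge2$.} If $h=1$, then every $d_X$ lies in $\Q$. Each $d_X$ is an algebraic integer, so every $d_X\in\Z$. Since $2p$ is square-free, Lemma \ref{lemmadimension} then makes $\cC$ pointed, contrary to the hypothesis. Hence $h\ge2$.

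\emph{Step 2: $h\le m$.} Consider the Galois orbit $\cO_\one$.
\begin{itemize}
\item Since $h_\one(Y)=d_Y$, we have $\sigma\circ h_\one=h_{\hat\sigma(\one)}$. Comparing formal codegrees gives $\frac{p}{d_{\hat\sigma(\one)}^2}=\sigma(p)=p$, so $d_{\hat\sigma(\one)}^2=1$.
\item The character $h_\one$ is the vector $(d_Y)_Y$. Hence $\hat\sigma(\one)=\one$ exactly when $\sigma$ fixes $\K$, and therefore $|\cO_\one|=[\K:\Q]=h$.
\end{itemize}
So $\cO_\one\subseteq\{X\in\I_0\mid d_X^2=1\}$, which gives $h\le m$.

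\emph{Step 3: $hm\le p$.} Let $D=\{X\in\I_0\mid d_X^2=1\}$.
\begin{itemize}
\item The same codegree comparison shows that $D$ is stable under every $\hat\sigma$: codegree $p$ is rational, so it is preserved.
\item For $X\in D$, Galois symmetry gives $\sigma(d_Y)=\pm\hat S_{\hat\sigma(\one),Y}$. The rows of $\hat S$ indexed by $D$ are therefore $\pm$ Galois conjugates of such dimension vectors.
\item I would use the orthogonality relation $\hat S^2=pC$ on these rows. I also plan to show that each Galois orbit inside $D$ has size exactly $h$; I expect this to go through the same stabilizer argument as for $\one$, applied to $h_X$.
\item Then I would apply $\Tr_{\K/\Q}$ to $p=\sum_{X\in\I_0}d_X^2$. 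This gives
\[
hp=hm+\sum_{Y\in\I_0\setminus D}\Tr_{\K/\Q}(d_Y^2).
\]
\item Next I bound each remaining trace from below by AM--GM, $\Tr(d_Y^2)\ge h\,N(d_Y^2)^{1/h}$. Here $N(d_Y^2)$ is a positive integer and $p/d_Y^2$ is integral.
\item Alternatively, I would look for an upper bound of the form $|\sigma(d_X)|^2\le p$ for each conjugate, combined with counting the $h$-fold orbits.
\end{itemize}
The aim is to reach $hm\le p$ by comparing the contribution of $D$, namely $m$ objects each appearing in $h$ conjugate characters of codegree $p$, with the total $\sum_{X\in\I_0}\frac{d_X^2}{p}=1$. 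Summed over all conjugates, that total equals $h$.

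\emph{Step 4: $h<\sqrt p$.} From Steps 2 and 3, $h^2\le hm\le p$. Equality $h^2=p$ is impossible because $p$ is prime, so $h<\sqrt p$.

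The main obstacle is Step 3. Steps 1, 2 and 4 follow directly from Lemma \ref{lemmadimension} and Galois symmetry. The inequality $hm\le p$ genuinely requires controlling the non-$D$ part of the dimension sum, or the structure of $D$. I would first try to show that $D$ is a disjoint union of free Galois orbits, each contributing $h$ to a Galois-invariant count bounded by $p$, using the $\Q$-linear independence technique from the proof of Proposition \ref{propositionsquarefree}.
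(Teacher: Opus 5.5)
\textbf{Gap: Step 3 ($hm\le p$) is never proved.} Your Steps 1, 2 and 4 are fine. Step 2 is essentially the paper's argument: the Galois conjugates of the dimension character have rational codegree, so $\cO_\one$ consists of $h$ objects with $d_X^2=1$. But none of the routes you list for Step 3 can give $hm\le p$.
\begin{itemize}
\item Tracing $p=\sum_{X\in\I_0}d_X^2$ over $\K$ gives $hp=hm+\sum_{Y\in\I_0\setminus D}\Tr_{\K/\Q}(d_Y^2)$, which only yields $m\le p$.
\item Adding AM--GM only yields $|\I_0|\le p$.
\item The per-conjugate bound $\sigma(d_X)^2\le p$ is equally weak.
\end{itemize}
All of these treat each conjugate dimension vector separately and discard the cross terms between distinct conjugates, which is exactly where the needed information lies.

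Two of your auxiliary claims are also unjustified. You say the rows of $\hat S$ indexed by $D$ are $\pm$ Galois conjugates of the dimension vector. Since $X\mapsto h_X$ is injective, this holds precisely for the rows indexed by $\cO_\one$; for $X\in D\setminus\cO_\one$ the row is $d_Xh_X$, which is not in the orbit of $h_\one$. You also say every Galois orbit in $D$ has size $h$. But the orbit of $X$ has size $[\Q(h_X(Y):Y\in\I_0):\Q]$, which has no reason to equal $h$. So a ``free orbit'' decomposition of $D$ is neither available nor needed.

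\textbf{The missing idea: use orthogonality between distinct conjugates.} For $\sigma\in\Gal(\K/\Q)$ put $v_\sigma=(\sigma(d_X))_{X\in\I_0}$.
\begin{itemize}
\item $v_\sigma$ is $\pm$ the row of $\hat S$ indexed by $\hat\sigma(\one)$.
\item $\hat\sigma(\one)$ is self-dual, because $h_{\hat\sigma(\one)}$ is real-valued.
\item Hence $\hat S^2=pC$ gives $v_\sigma\cdot v_\tau=p\,\delta_{\sigma,\tau}$.
\end{itemize}
Now set $w=\sum_\sigma v_\sigma$. Its coordinates are $w_Y=\Tr_{\K/\Q}(d_Y)$ and $\|w\|^2=hp$. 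For each of the $m$ objects $Y\in D$ we have $w_Y=\pm h$, so $hp\ge mh^2$, i.e.\ $hm\le p$.

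Equivalently, apply Bessel's inequality to the orthonormal family $v_\sigma/\sqrt p$ and the vector $u=\sum_{Y\in D}d_Ye_Y$. This vector satisfies $\|u\|^2=m$ and $\langle u,v_\sigma\rangle=m$ for every $\sigma$, giving $hm^2/p\le m$.

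With this in place, your Step 4 goes through as written.
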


\begin{proof}
The dimension character is $h_\one(-)=\dim(-)$, its Galois orbit   has size $h$ (i.e., the cardinal of $\cO_\one$) by definition. As $\dim(\cC)$ is an integer,  \cite[Corollary 3.4]{Yu2020} shows $h\le m$. By Lemma \ref{lemmadimension},  $h\geq2$.

For $\sigma\in\Gal(\K/\Q)$ define
$
 v_\sigma=(\sigma(d_X))_{X\in \I_0}\in\C^{\I_0}$. The Galois symmetry shows that $v_\sigma$ is the row of $\widehat S$ indexed by
$\hat{\sigma}(\one)$, divided by $d_{\hat\sigma(\one)}$. Notice that $\hat{\sigma}(\one)$ is self-dual, thus \cite[Proposition 2.7]{BGNPRW}
\begin{equation}\label{equationorthogonality}
 v_\sigma\cdot v_\tau=p\delta_{\sigma,\tau}.
\end{equation}
 Meanwhile, let   $w:=\sum_\sigma v_\sigma$, for each $Y$ with $d_Y^2=1$,  the corresponding coordinate of  $w$
is $hd_Y=\pm h$,  we have 
$hp=\|w\|^2\geq\sum_{Y\in\I_0,d_Y^2=1}h^2=mh^2$.  Hence  $hm\leq p$ and $h<\sqrt p$.

Let $\mathfrak l$ be a prime ideal of $\mathcal O_\K$ such that $\mathfrak{l}\cap\Z=\langle \ell\rangle$, where $\cO_\K$ is the ring of algebraic integers of $\K$. Suppose that $1\ne\sigma\in\Gal(\K/\mathbb Q)$
fixes $\mathfrak l$ and induces the identity map on
$\mathcal O_\K/\mathfrak l$. Since $d_X\in\mathcal O_\K$, $\forall X\in\cO(\cC)$, we have
$
\sigma(d_X)\equiv d_X\pmod{\mathfrak l}$.
Applying Equation \eqref{equationorthogonality} with $\tau=1$ gives
\[
 0=\sum_{X\in\I_0}\sigma(d_X)d_X
   \equiv\sum_{X\in\I_0} d_X^2=p\pmod{\mathfrak l}.
\]
Thus $\ell=p$.  The abelian extension $\K/\Q$ is unramified at every
finite prime other than $p$, so its conductor is a power of $p$.
Hence $\K\subseteq\Q(\zeta_{p^a})$ for some $a\ge1$.
Since $h<\sqrt{p}$, the $p$-Sylow subgroup of
$\Gal(\Q(\zeta_{p^a})/\Q)$ acts trivially on $\K$.
It follows that $\K\subseteq\Q(\zeta_p)$. 
\end{proof}

\begin{corollary}\label{corollaryp=7}
    Super-modular fusion category of global dimension $14$ is pointed.
\end{corollary}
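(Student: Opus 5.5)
Suppose $\cC$ is a super-modular fusion category of global dimension $14$ that is not pointed; we derive a contradiction from Proposition \ref{propositionfield} with $p=7$. That proposition gives $2\le h\le m$, $hm\le 7$ and $h<\sqrt7$, so $h=2$ and $m\in\{2,3\}$. It also gives $\K\subseteq\Q(\zeta_7)$. Since $h=[\K:\Q]=2$, the field $\K$ is the unique quadratic subfield of $\Q(\zeta_7)$, namely $\Q(\sqrt{-7})$. This field is imaginary.

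Now use the fact that $\cC$ is spherical, hence pseudo-unitary up to Galois conjugation. More simply, each $d_X$ is real, being the dimension of an object in a spherical fusion category with $d_X=d_{X^*}=\overline{d_X}$. Also $\sum_{X\in\I_0} d_X^2=7$ forces every $d_X^2$ to be a totally positive algebraic integer. So $\K$ is a totally real field. A totally real field cannot equal $\Q(\sqrt{-7})$, so $\K=\Q$. This contradicts $h=2$. Equivalently, once $h=1$ is forced, Lemma \ref{lemmadimension} shows $\cC$ is pointed.

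The main point to check is that $\K$ is totally real. Every Galois conjugate of $d_X$ is again a value $\sigma(d_X)=d_{\hat\sigma(\one)}^{-1}\hat S_{\hat\sigma(\one),X}$ of a character. Each $d_X$ is real because $\hat S$ is symmetric with $\hat S_{X,Y^*}=\overline{\hat S_{X,Y}}$. The entries $d_X=\hat S_{\one,X}$ then satisfy $d_X=d_{X^*}=\overline{d_X}$. Thus $\K$ is a real abelian field, so it lies in $\Q(\zeta_7)^+$, which is cubic. A quadratic subfield of a cubic field is impossible. This again forces $h=1$, contrary to $h\ge2$, and hence $\cC$ is pointed.

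As a backup in case the reality argument needs more care, we can argue with orthogonality. With $h=2$, the vectors $v_1,v_\sigma$ satisfy $v_1\cdot v_\sigma=0$. Moreover $d_X+\sigma(d_X)\in\Z$ and $d_X\sigma(d_X)\in\Z$, since $\sigma(d_X)=\pm\overline{d_X}$-type conjugates in $\Q(\sqrt{-7})$. These conditions give $\sum_X N(d_X)=0$ with norms nonnegative, which forces all $d_X=0$, a contradiction.
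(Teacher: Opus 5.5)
Your argument is correct and is essentially the paper's: Proposition \ref{propositionfield} forces $h=2$ and $\K\subseteq\Q(\zeta_7)$. Since $\K$ is real, it lies in the cubic field $\Q(\zeta_7)^+$, so $h\mid 3$, a contradiction. You make explicit the reality of $\K$, which the paper's one-line proof uses silently. One small slip: total positivity of $d_X^2$ does not follow from $\sum_{X\in\I_0}d_X^2=7$, but you never need it, since reality of the $d_X$ together with $\K/\Q$ being abelian already gives $\K\subseteq\Q(\zeta_7)^+$.
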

  \begin{proof}
     If $\cC$ is not pointed, then it follows from Proposition \ref{propositionfield} that $[K:\Q]<\sqrt{7}$ and $[\K:\Q]\mid 3$, it is a contradiction. 
  \end{proof} 

\begin{lemma}\label{lemmatrace}
Let $p\ge11$ be prime, 
$n=[\Q(\zeta_p)^+:\Q]=(p-1)/2$.  Suppose $L\subseteq \Q(\zeta_p)^+$ and has degree
$2\le k<\sqrt p$.  Then 
\begin{equation}\label{equationtracebound}
 \Tr_{\Q(\zeta_p)^+/\Q}(\alpha^2)
 \ge \frac{np(k-1)}{k^2}
 \ge p\quad \forall \alpha\in\cO_L\setminus\Q.
\end{equation}
\end{lemma}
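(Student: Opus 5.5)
The plan is to compute the trace exactly in terms of integer coordinates with respect to a Gauss period basis of $\cO_L$, and then to bound the resulting quadratic form from below.

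\emph{Setting up coordinates.} Since $p$ is prime, $\Gal(\Q(\zeta_p)/\Q)\cong(\Z/p)^\times$ is cyclic. So $L$ is the fixed field of the unique subgroup $H$ of index $k$ in $(\Z/p)^\times$, and $f:=|H|=(p-1)/k=2n/k$. Because $L\subseteq\Q(\zeta_p)^+$, we have $-1\in H$, and every element of $L$ is real. Let $\eta_1,\dots,\eta_k$ be the Gauss periods $\eta_i=\sum_{j\in g_iH}\zeta_p^j$ over the cosets $g_iH$. I will use the standard fact that, since $p$ is tamely ramified in $L$, these periods form a (normal) integral basis of $\cO_L$. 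Also $\sum_i\eta_i=-1$. Write $\alpha=\sum_i a_i\eta_i$ with $a_i\in\Z$. Then $\alpha\in\Q$ if and only if all $a_i$ are equal. One direction is clear; for the other, the $\eta_i$ are linearly independent and $\Q\cap\cO_L=\Z\cdot\sum_i\eta_i$.

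\emph{Exact trace computation.} Expand $\alpha=\sum_{j=1}^{p-1}c_j\zeta_p^j$, where $c_j=a_i$ for $j\in g_iH$. Since $\alpha$ is real, $\alpha^2=\alpha\bar\alpha$. Using $\Tr_{\Q(\zeta_p)/\Q}(\zeta_p^m)=p-1$ if $p\mid m$ and $-1$ otherwise, one gets the identity
\[
\Tr_{\Q(\zeta_p)/\Q}(\alpha\bar\alpha)=p\sum_j c_j^2-\Big(\sum_j c_j\Big)^2=pf\sum_i a_i^2-f^2\Big(\sum_i a_i\Big)^2 .
\]
The trace down to $\Q$ from $\Q(\zeta_p)^+$ is half of this, so
\[
\Tr_{\Q(\zeta_p)^+/\Q}(\alpha^2)=\frac nk\Big(p\sum_i a_i^2-\frac{2n}{k}\Big(\sum_i a_i\Big)^2\Big)\ge\frac{np}{k}\Big(\sum_i a_i^2-\frac1k\Big(\sum_i a_i\Big)^2\Big),
\]
where the inequality uses $2n=p-1<p$.

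\emph{Lower bound on the quadratic form.} The bracket equals $\frac1k\sum_{i<j}(a_i-a_j)^2$. If the $a_i$ are not all equal, partition the indices by value into at least two blocks of sizes $s$ and $k-s$ (merging blocks if there are more). Then at least $s(k-s)\ge k-1$ pairs have $a_i\neq a_j$, and each such pair contributes at least $1$. Hence the bracket is at least $(k-1)/k$, which gives the first inequality $\Tr(\alpha^2)\ge np(k-1)/k^2$.

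\emph{Second inequality.} It remains to show $n(k-1)\ge k^2$. Since $k<\sqrt p$ and $k^2$ is an integer, $k^2\le p-1=2n$. For $k\ge3$ we have $n(k-1)\ge 2n\ge k^2$. For $k=2$ the condition is $n\ge4$, i.e. $p\ge 9$, which holds because $p\ge11$.

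\emph{Main obstacle.} The only non-elementary input is that the Gauss periods give an integral basis of $\cO_L$. This holds because $\Q(\zeta_p)$ has the normal integral basis $\{\zeta_p^j\}_{1\le j\le p-1}$, whose traces to $L$ are exactly the $\eta_i$, and the trace map $\cO_{\Q(\zeta_p)}\to\cO_L$ is surjective by tameness. I would cite this rather than reprove it.
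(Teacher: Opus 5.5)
Your proposal is correct and follows the paper's proof essentially step for step. Both arguments use the Gauss-period integral basis, the exact formula $\Tr(\alpha^2)=\frac nk\bigl(p\sum a_i^2-\frac{2n}{k}(\sum a_i)^2\bigr)$ rewritten as a variance, the bound $\sum_{i<j}(a_i-a_j)^2\ge k-1$, and the check $n(k-1)\ge k^2$. The differences are cosmetic: the paper computes $\Tr_{L/\Q}(\eta_i\eta_j)=p\delta_{ij}-|H|$ directly, and it obtains the integral basis without tameness by noting that an $H$-fixed element has $H$-invariant integer coefficients in the basis $\{\zeta_p^j\}$.
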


\begin{proof}
Let  $H=\Gal(\Q(\zeta_p)/L)$.  Because $L$ is real, $-1\in H$.
Write $A_1,\ldots,A_k$ for the cosets of $H$ in
$(\Z/p\Z)^\times$, and define the Gaussian periods $
 \eta_j=\sum_{a\in A_j}\zeta_p^a$.
The elements $\zeta_p,\zeta_p^2,\ldots,\zeta_p^{p-1}$ form an
integral basis of $\Q(\zeta_p)$, permuted by $H$.
Taking $H$-invariants shows that $\eta_1,\ldots,\eta_k$ form an
integral basis of $L$.

For $a,b\in(\Z/p\Z)^\times$, it is well-known that 
\[
 \Tr_{\Q(\zeta_p)/\Q}(\zeta_p^{a+b})=
 \begin{cases}
 p-1,&a+b\equiv0\pmod p,\\
 -1,&a+b\not\equiv0\pmod p.
 \end{cases}
\]
There are $h$ pairs with $a+b=0$ in $A_i\times A_j$ when $i=j$,
and none when $i\ne j$.  Dividing the resulting trace by $h$ gives
$ \Tr_{L/\Q}(\eta_i\eta_j)=p\delta_{ij}-h$.
Write $\alpha=\sum_j a_j\eta_j$, with $a_j\in\Z$.
Then
\begin{align}
 \Tr_{L/\Q}(\alpha^2)
 =p\sum_j a_j^2-h\Bigl(\sum_j a_j\Bigr)^2\notag
 =\frac pk\sum_{i<j}(a_i-a_j)^2
   +\frac1k\Bigl(\sum_j a_j\Bigr)^2.
 \label{eq:trace-variance}
\end{align}
Since $\sum_j\eta_j=-1$, rationality of $\alpha$ is equivalent to
all $a_j$ being equal.
For nonconstant integer coefficients,
$
 \sum_{i<j}(a_i-a_j)^2\ge k-1.
$
Indeed, if the minimum coefficient occurs $r$ times, the $r(k-r)$
pairs crossing from these entries to the others each contribute at
least $1$, and $r(k-r)\ge k-1$.
Therefore
\[
 \Tr_{L/\Q}(\alpha^2)\ge\frac{p(k-1)}k,
\]
and multiplication by $[\Q(\zeta_p)^+:L]=n/k$ gives the first inequality in
\eqref{equationtracebound}.

For the second inequality, if $k\ge3$, then $k^2<p$ implies
$p-1\ge k^2$, hence  $
 n(k-1)=\frac{(p-1)(k-1)}2\ge k^2$;
If $k=2$, then $p\ge11$ gives $n\ge5\ge4=k^2$.
Thus $n(k-1)\ge k^2$ in both cases.
\end{proof}

\begin{remark}\label{rem:units}
The result in Lemma~\ref{lemmatrace} is not a lower bound for arbitrary totally positive
units.  For example, let
$ \alpha=\zeta_p+\zeta_p^{-1}$, it is well-known that $\alpha=\zeta_p+\zeta_p^{-1}$ generates $\Q(\zeta_p)^+$, and
$
 \Tr_{\Q(\zeta_p)^+/\Q}(\alpha^2)=p-2<p.
$
The field over which the trace is taken also matters.  For $p=13$,
take $L=\Q(\sqrt{13})$ and
$\alpha=\frac{3+\sqrt{13}}2,
 N_{L/\Q}(\alpha)=-1.
$
Then $\alpha^2$ is a totally positive unit, and
$
 \Tr_{L/\Q}(\alpha^2)=11<13, 
 \Tr_{\Q(\zeta_p)^+/\Q}(\alpha^2)=3\cdot11=33$.
\end{remark}

Let $\cC$ be a super-modular fusion category with  $\cC'=\sVec=\langle f\rangle$, $\cC^\text{rev}$ be the reverse braided fusion category of $\cC$, notice that the modular data of $\cC^\text{rev}$ are $(\bar S,T^{-1})$. Let 
$ \cM=\mathcal{Z}(\cC)$ be the Drinfeld center of $\cC$, then
\cite[Corollary~4.4]{DNO} shows that there is  a braided tensor equivalence $
\cB:=\sVec_{\mathcal{Z}(\cC)}'\cong\cC\boxtimes_{\sVec}\cC^{\mathrm{rev}}$,
where $\cC\boxtimes_{\sVec}\cC^{\mathrm{rev}}$ is the relative Deligne tensor product over $\sVec$, see \cite{DNO} for details. 
\begin{theorem}
Let $\cC$ be a super-modular fusion category. Let $N_0=\FSexp(\cC)$ and $N=4N_0$.
Then there is a congruence representation
$\phi:\Gamma_\theta\longrightarrow\SL(\C^{\I_0\times \I_0})$ 
such that
\begin{equation}\label{eq:center-representation}
 \phi(\hxs)=s\otimes\overline s,
 \quad \phi(\hxt^2)=\hat{T}_\cC^2\otimes \hat{T}_\cC^{-2},
 \quad \Gamma(N)\subseteq\ker (\phi), 
\end{equation}
where $s:=\frac{\sqrt{2}}{\sqrt{\dim(\cC)}}\hat{S}_\cC$. For $a\in(\Z_N)^\times$, let $\sigma_a(\zeta_N)=\zeta_N^a$,
and let $D_a$ denote the residue class of
$\text{diag}(a,a^{-1})$ in the image of $\Gamma_\theta$ modulo $N$, $G_{\sigma_a}:=\sigma_a(s)s^{-1}$.
Then we have 
\begin{equation}\label{equationdiagonalGaloistwist}
 R(D_a)=G_{\sigma_a}\otimes G_{\sigma_a},\quad 
\sigma_a^2(\theta_X^2/\theta_Y^2)=\theta_{\hat{\sigma}_a(X)}^2/\theta_{\hat{\sigma}_a(Y)}^2,
 \qquad
 \theta_{\hat{\sigma}(X)}^2=\theta_{\hat{\sigma}_a(\one)}^2\,\sigma_a^2(\theta_X^2).
\end{equation}
\end{theorem}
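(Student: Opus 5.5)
The plan is to realize $\phi$ inside the modular category $\cM=\mathcal Z(\cC)$, via the subcategory $\cB=\sVec'_{\cM}\cong\cC\boxtimes_{\sVec}\cC^{\mathrm{rev}}$ recalled just before the statement, and then apply the congruence theory of \cite{DLN,NRW} to $\cM$.

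\textbf{Step 1: restrict the $\SL_2(\Z)$-representation of $\cM$.} Since $\cM$ is modular with $\dim(\cM)=\dim(\cC)^2$, it carries a congruence representation $\rho_\cM$ of level dividing $\ord(T_\cM)$. I will restrict it to $\Gamma_\theta$. The $\Gamma_\theta$-stable subspace I want is spanned by the simples of $\cB$, grouped into $f$-orbits. A simple of $\cB$ has the form $X\boxtimes Y$ with $X,Y\in\I_0$, modulo the identification $X\boxtimes Y\sim fX\boxtimes fY$, so these orbits are indexed by $\I_0\times\I_0$. This is the usual passage from a super-modular $\cC$ to the "fermionic quotient" representation of $\Gamma_\theta$, as in \cite[Theorem 3.1]{BRWZ}, here applied to $\cB$ instead of $\cC$.

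\textbf{Step 2: identify $\phi(\hxs)$ and $\phi(\hxt^2)$.} The reduced $S$-matrix of $\cB$ should be $\hat S_\cC\otimes\overline{\hat S_\cC}$, up to a factor of $2$ coming from the relative tensor product. Normalizing by $\sqrt{\dim\cB}$ gives $s\otimes\bar s$ with $s=\sqrt2\,\hat S_\cC/\sqrt{\dim\cC}$. The squared twists of $X\boxtimes Y$ are $\theta_X^2\theta_Y^{-2}$. The central charge of $\cM$ is $1$, and the contributions of $\cC$ and $\cC^{\mathrm{rev}}$ cancel, so no third root of unity is needed. This means $\phi$ is a genuine linear representation, and in fact $\SL$-valued.

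\textbf{Step 3: the level.} $\ord(T_\cM)$ divides $\FSexp$-type quantities. Since $\FSexp(\mathcal Z(\cC))=\FSexp(\cC)$, the twists of $\cM$ have order dividing $N_0$. Passing to $\Gamma_\theta$ and to the quotient by $f$ may introduce signs, which I expect to cost a factor of at most $4$. This would give $\Gamma(N)\subseteq\ker\phi$ with $N=4N_0$. I expect this to be the main obstacle. The cleanest route I would try first is to take the congruence kernel of $\rho_\cM$, of level $\ord(T_\cM)\mid N_0$, and intersect it with $\Gamma_\theta$, which has index $3$ and level $2$. The sign ambiguity arising when the $f$-orbits are identified is a character of $\Gamma_\theta$ factoring through level $4$. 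This yields level $\mathrm{lcm}(N_0,4)\mid 4N_0$.

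\textbf{Step 4: Galois formulas.} Once $\phi$ is congruence of level $N$, the standard argument of \cite{DLN} applies. Since $s\otimes\bar s$ has entries in $\Q(\zeta_N)$, $\sigma_a$ acts on $\phi(\hxs)$ by $\phi(D_a)\phi(\hxs)$ for a lift of $\mathrm{diag}(a,a^{-1})$, which lies in $\Gamma_\theta$ modulo $N$. Writing $\sigma_a(s)=G_{\sigma_a}s$ gives $\sigma_a(s\otimes\bar s)=(G_{\sigma_a}\otimes G_{\sigma_a})(s\otimes\bar s)$, using that $\bar s$ transforms with the same signed permutation since $\hat S_\cC$ is real up to the permutation. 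This identifies $R(D_a)$. Finally, conjugating $\phi(\hxt^2)$ by $D_a$ gives $\phi(\hxt^{2a^2})=\sigma_a^2$-twist of $\phi(\hxt^2)$. Reading this off the diagonal, and using that $G_{\sigma_a}$ is a signed permutation realizing $\hat\sigma_a$, gives $\sigma_a^2(\theta_X^2/\theta_Y^2)=\theta^2_{\hat\sigma_a(X)}/\theta^2_{\hat\sigma_a(Y)}$. Setting $Y=\one$ yields the last formula.
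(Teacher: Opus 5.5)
Your overall route is the paper's. You work in $\cM=\mathcal Z(\cC)$, restrict its linear (central charge $1$) representation $\rho_\cM$ to $\Gamma_\theta$ and to an $f$-orbit subspace of $\cB=\sVec'_{\cM}$, and then use Galois symmetry. However, you have put the difficulty in the wrong step. The level is not an obstacle once the subspace is specified. Take $W=\operatorname{span}\{e_Z+\varepsilon e_{f\otimes Z}:Z\in J\}$ with $\varepsilon=d_f$. Since $(S_\cM)_{Y,f\otimes Z}=\varepsilon\mu_Y(S_\cM)_{Y,Z}$, with $\mu_Y=\pm1$ and $\mu_Y=1$ exactly for $Y\in\cB$, one gets $s_\cM(e_Z+\varepsilon e_{f\otimes Z})=\frac{2}{\dim(\cC)}\sum_{Y\in J}(S_\cM)_{Y,Z}(e_Y+\varepsilon e_{f\otimes Y})$ for $Z\in J$. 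The relation $\theta_{f\otimes Z}^2=\theta_Z^2$ on $\cB$ gives $T_\cM^2W\subseteq W$. With the opposite sign, $s_\cM$ would map into the span of simples outside $\cB$, so the sign is forced, not ambiguous. Thus $\phi$ is an honest subrepresentation of $\rho_\cM|_{\Gamma_\theta}$. Its matrix is exactly $s\otimes\bar s$, with normalization $2/\dim(\cC)=1/\sqrt{\dim(\cB)/2}$ rather than $1/\sqrt{\dim(\cB)}$. The inclusion $\Gamma(N)\subseteq\Gamma(N_0)\cap\Gamma_\theta\subseteq\ker\phi$ is then immediate. The level-$4$ sign character you postulate is neither proved nor needed: changing orbit representatives only rescales basis vectors by $\pm1$.

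The genuine gap is in Step 4. The identity $\sigma_a(\phi(\hxs))=\phi(D_a)\phi(\hxs)$ does not follow from $\phi$ being congruence with entries in $\Q(\zeta_N)$. Both of those properties survive rescaling the basis by a diagonal matrix over $\Q(\zeta_N)$, while the identity does not. The DLN result $\sigma_a(s)s^{-1}=\rho(\hxt^a\hxs\hxt^b\hxs\hxt^a\hxs^{-1})$ is proved for the $\SL_2(\Z)$-representation of a modular category. Its proof uses $\rho(\hxt)$ itself and fusion-rule input. Since $\phi$ is only defined on $\Gamma_\theta$, where odd powers of $\hxt$ are unavailable, that argument cannot be run on $\phi$. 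The fix, which is what the paper does, is to apply DLN to $\cM$. This gives $G^{\cM}_{\sigma_a}=\rho_\cM(D_a)$ and $\sigma_a^2(T_\cM)=G^{\cM}_{\sigma_a}T_\cM(G^{\cM}_{\sigma_a})^{-1}$. Here the word $\hxt^a\hxs\hxt^b\hxs\hxt^a\hxs^{-1}$ (with $ab\equiv1$) lies in $\Gamma_\theta$, because it is $\equiv\mathrm{diag}(a,b)\pmod N$ and $N$ is even. Then restrict to $W$. Because $W$ has a rational basis, restriction commutes with $\sigma_a$, and this yields $\phi(D_a)=\sigma_a(s\otimes\bar s)(s\otimes\bar s)^{-1}=G_{\sigma_a}\otimes G_{\sigma_a}$. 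The factor $\bar s$ picks up the same $G_{\sigma_a}$ because $\sigma_a$ commutes with complex conjugation and $G_{\sigma_a}$ is real, not because $\hat S_\cC$ is real. After this, your derivation of the twist identities from $D_a\hxt^2D_a^{-1}\equiv\hxt^{2a^2}\pmod N$ is valid, and slightly cleaner than the paper's squaring of the $T_\cM$-identity.
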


\begin{proof}
It follows from \cite[Corollary 4.4]{DNO} that the relative product $\cC\boxtimes_{\sVec}\cC^{\mathrm{rev}}$ 
is obtained using
the central algebra
$A=\one\boxtimes\one\oplus f\boxtimes f$, tensoring with $f\boxtimes f$ acts freely on simple pairs, the free $A$-modules are therefore simple, with pairs identified by
$(X,Y)\sim(f\otimes X,f\otimes Y)$.
After choosing representatives for the remaining action of $f$,
the reduced simple labels of $\cB$ are indexed by $J:=\I_0\times \I_0$, its normalized reduced
data are
\begin{equation}\label{eq:relative-data}
 s_{\cB}=s\otimes\overline s,
 \qquad \hat{T}_\cB=\hat{T}_\cC\otimes \hat{T}_\cC^{-1}.
\end{equation}

The Drinfeld center $\mathcal{Z}(\cC)$ has its canonical modular representation $\rho$ \cite{DLN,EGNO}, with
\[\rho(\hxs)=s_{\cM}:=\frac{S_{\cM}}{\dim(\cC)},
 \quad \rho(\hxt):=T_{\cM},
 \quad \Gamma(N_0)\subseteq\ker(\rho),
\]
where $(S_\cM,T_\cM)$ is the modular data of $\cM$. Its matrices have entries in $\Q(\zeta_{N_0})$. Let $
 W:=\operatorname{span}\{e_Z+\varepsilon e_{f\otimes Z}:Z\in J\}
 \subseteq V_\cM$.
Then $s_{\cM}$ and $T_{\cM}^2$ preserve $W$.
In fact,  
$ (S_{\cM})_{Y,f\otimes Z}
=\varepsilon\mu_Y(S_{\cM})_{Y,Z}$ by \cite[Section 3]{BRWZ}, where  $\mu_Y\in\{1,-1\}$.
Thus the coefficient of $e_Y$ in $S_\cM(e_Z+\varepsilon e_{f\otimes Z})$ is $2(S_\cM)_{Y,Z}$ if $Y\in\cB$, otherwise it is $0$. Hence, for any $Z\in\cO(\cM)$, we obtain 
\begin{align*}
 s_\cM (e_Z+\varepsilon e_{f\otimes Z}) =\frac{1}{\dim(\cC)}\sum_{Y\in\cO(\cM)}[(S_\cM)_{Y,Z}+\varepsilon (S_\cM)_{Y,f\otimes Z}]e_Y=\frac{2}{\dim(\cC)}\sum_{Y\in\cO(\cB)}(S_\cM)_{Y,Z} e_Y,
\end{align*}
in particular, for $Z\in J$, $ s_\cM (e_Z+\varepsilon e_{f\otimes Z})=\frac{2}{\dim(\cC)}\sum_{Y\in J}(\hat{S}_\cB)_{Y,Z} e_Y$.
Also we have $\theta_{f\otimes Z}^2=\theta_Z^2$ for $Z\in\cB$ by \cite{BRWZ}, restrict $\rho$ to $\Gamma_\theta$ and   to $W$,  then Equation \eqref{eq:center-representation} follows from Equation 
\eqref{eq:relative-data}.

Let
$G^{\cM}_\sigma=\sigma(s_{\cM})s_{\cM}^{-1}.
$
The center Galois identities \cite[Lemma~4.2]{DLN} give
\begin{equation}\label{equationcenterGalois}
 G^{\cM}_{\sigma_a}=\rho(D_a),
 \qquad
 \sigma_a^2(T_{\cM})
   =G^{\cM}_{\sigma_a}T_{\cM}(G^{\cM}_{\sigma_a})^{-1}.
\end{equation}
Here the first equality can be read in the congruence quotient:
if $ab\equiv1\pmod N$, the element
$\hxt^a\hxs\hxt^b\hxs \hxt^a\hxs^{-1}$
reduces to $\text{diag}(a,b)$ modulo $N$ and belongs to $\Gamma_\theta$. Because $W$ has a rational basis, restriction commutes with Galois
action.  Cyclotomic automorphisms commute with complex conjugation,
and $G_\sigma$ is real.  Hence
\[
 G^{\cM}_\sigma\big|_W
 =\sigma(s\otimes\overline s)(s\otimes\overline s)^{-1}
 =G_\sigma\otimes G_\sigma.
\]
Square the second identity in Equation \eqref{equationcenterGalois} and restrict it
to $W$, then the diagonal entry indexed by $(X,Y)$ is exactly $\sigma_a^2(\theta_X^2/\theta_Y^2)=\theta_{\hat{\sigma}_a(X)}^2/\theta_{\hat{\sigma}_a(Y)}^2$, the remaining equation follows from $Y=\one$.
\end{proof}

Let $\cC$ be a super-modular fusion category of global dimension $2p$. By the Cauchy theorem for spherical fusion categories
\cite[Theorem~3.9]{BNRW}, the prime divisors of $N_0:=\FSexp(\cC)$ are precisely
$2$ and $p$.  Thus we may write
$
 N=4N_0=2^A p^B, A\ge2, B\ge1.
$
All the matrices and automorphisms used above may be taken over
$\Q(\zeta_N)$: the entries of $\widehat S$ and the twists lie there,
and $\sqrt p\in\Q(\zeta_{4p})\subseteq\Q(\zeta_N)$.

\begin{lemma}\label{lemmaPpower}
    Let $\cC$ be a super-modular fusion category of global dimension $2p$, where $p>5$ is a prime. If $[\K:\Q]>1$, then at least one $\theta_X^2$ has order divisible by $p$ for some $X\in \I_0$.
\end{lemma}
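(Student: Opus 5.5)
We argue by contradiction. Suppose every $\theta_X^2$, $X\in\I_0$, has order prime to $p$, so that all these orders are powers of $2$. We will combine the Galois twist identities \eqref{equationdiagonalGaloistwist} with Proposition \ref{propositionfield}, which puts $\K$ inside $\Q(\zeta_p)$ with $2\le h<\sqrt p$. The idea is to produce a nontrivial Galois automorphism $\sigma_a$ that fixes every $\theta_X^2$ and acts trivially modulo $2$-power roots of unity, yet moves the dimension character. Such an automorphism should contradict the orthogonality relation \eqref{equationorthogonality}, or the ramification argument in the proof of Proposition \ref{propositionfield}.

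Concretely, write $N=2^Ap^B$ and choose $a\in(\Z_N)^\times$ with $a\equiv1\pmod{2^A}$ and $a\not\equiv 1\pmod p$, in such a way that $\sigma_a|_\K\ne\mathrm{id}$. This is possible because $\K\subseteq\Q(\zeta_p)$ is nontrivial, since $h\ge2$. Then $\sigma_a$ fixes $\zeta_{2^A}$, so it fixes every $\theta_X^2$. By the last identity in \eqref{equationdiagonalGaloistwist},
\[
\theta_{\hat\sigma_a(X)}^2=\theta^2_{\hat\sigma_a(\one)}\,\theta_X^2\qquad\text{for all }X\in\I_0 .
\]
Setting $u:=\theta^2_{\hat\sigma_a(\one)}$, we see that $\hat\sigma_a$ shifts squared twists by $u$.

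Next we iterate. The automorphism $\sigma_a$ generates a group acting on the finite set $\I_0$. Applying the identity repeatedly, and using that $\hat\sigma_{a}\hat\sigma_{b}=\hat\sigma_{ab}$ on characters, we get $\theta^2_{\hat\sigma_a^k(\one)}=u^k$. We may also replace $a$ by a power so that its order is a power of $p$ or is odd and coprime to $2$, for example by taking $a$ of odd order modulo $p$ whenever $h$ has an odd part. Then $u$ is a $2$-power root of unity whose order divides the order of $\hat\sigma_a$ on the orbit $\cO_\one$, and $\hat\sigma_a^{\,\ord(a)}=\mathrm{id}$. This forces $u^{\ord(a)}=1$, so if $\ord(a)$ is odd we get $u=1$. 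In that case $\hat\sigma_a(\one)=:Y\ne\one$ has $\theta_Y^2=1$ and $d_Y^2=1$, the latter because $Y\in\cO_\one$ and $\sigma_a(\dim(\cC))=\dim(\cC)$. Moreover $\hat\sigma_a$ preserves the twists. We then feed this into the $\Gamma_\theta$-representation $\phi$. The matrix $G_{\sigma_a}=\sigma_a(s)s^{-1}$ is a signed permutation matrix, and it commutes with $\hat T^2$. It also commutes with $s$ up to the relation $R(D_a)=G_{\sigma_a}\otimes G_{\sigma_a}$, and we want to exploit this.

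The main obstacle is turning ``$\hat\sigma_a$ preserves squared twists and fixes $\hat T^2$'' into a contradiction, and handling the case where $h$ is a power of $2$, where $\ord(a)$ cannot be chosen odd. For the first point, I would argue as follows. Since $D_a$ lies in the image of $\Gamma(2^A)$-type elements, the representation $\phi$ restricted to the $p$-part factors through $\SL_2(\Z_{p^B})$, on which all $\hat T^2$-eigenvalues are trivial by hypothesis. An irreducible congruence representation of $p$-power level with trivial $\hxt^2$-image is trivial. Hence $\phi$ factors through $\SL_2(\Z_{2^A})$, $D_a$ acts trivially, and $G_{\sigma_a}\otimes G_{\sigma_a}=\I$. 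This forces $G_{\sigma_a}=\pm\I$, so $\sigma_a$ fixes $s$ up to sign, hence fixes $d_X^2$ and then $\K$ after adjusting signs, contradicting the choice of $a$. This factorization argument, via the Chinese remainder decomposition $\SL_2(\Z_N)\cong\SL_2(\Z_{2^A})\times\SL_2(\Z_{p^B})$ restricted to $\Gamma_\theta$, avoids the parity issue entirely. It is the step I would carry out first and check most carefully.
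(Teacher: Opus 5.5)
Your final factorization argument, the one you say you would carry out first, is essentially the paper's proof. The hypothesis gives $\phi(\hxt^{2^{A+1}})=1$, so the upper unipotent subgroup of the $\SL_2(\Z_{p^B})$-factor acts trivially, hence by normal closure the whole $p$-factor does; then $\phi(D_a)=G_{\sigma_a}\otimes G_{\sigma_a}=I$ for $a\equiv1\pmod{2^A}$ forces $\sigma_a(s)=\pm s$, so $\sigma_a$ fixes every $d_X=s_{\one,X}/s_{\one,\one}$ directly (no ``sign adjustment'' needed), contradicting $\Q\neq\K\subseteq\Q(\zeta_p)$; the preceding iteration and odd-order discussion can simply be dropped.
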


\begin{proof}
Suppose instead that all $u_i$ have $2$-power order.
Then $\phi(\mathsf t^2)^{2^A}=1$.
By using  the  Chinese remainder theorem, it was proved in \cite[Lemma~2.2]{BRWZ} that $
 \Gamma_\theta/\Gamma(N)
 \simeq H_2\times\SL_2(\Z_{p^B})$,
where $H_2$ is the $2$-Sylow subgroup of $\SL_2(\Z_{2^A})$.
The element $\mathsf t^{2^{A+1}}$ has trivial $2$-component and
its $p$-component is $
\begin{pmatrix}1&2^{A+1}\\0&1\end{pmatrix}$, which then generalizes the entire
upper unipotent subgroup,
 as $2^{A+1}$ is a unit modulo $p^B$.  The normal closure of this subgroup in
$\SL_2(\Z_{p^B})$ is the whole group: conjugation gives the lower
unipotent subgroup, and the two elementary subgroups generate
$\SL_2(\Z)$.  Hence $\phi$ is trivial on the $p$-factor.

Choose $a$ such that
$
 a\equiv1\pmod{2^A}, 
 a\text{ is a primitive root modulo }p^B.
$
Then $D_a$ lies in the $p$-factor.  By Equation
\eqref{equationdiagonalGaloistwist},
$
 G_{\sigma_a}\otimes G_{\sigma_a}=1
$; notice that an invertible matrix whose tensor square is the identity matrix is $\pm 1$, so $\sigma_a(s)=\pm s$, in particular,  $
 \sigma_a(d_X)=
 \sigma_a\left(\frac{s_{\one,X}}{s_{\one,\one}}\right)=d_X$.
However, since 
 $a$ is primitive modulo $p$, $\sigma_a$ restricts to a generator of $\Gal(\K/\Q)$,
thus $\K=\Q$, it is impossible as $[\K:\Q]>1$ by assumption.
\end{proof}

\begin{theorem}\label{supermodularpointed}
    Let $\cC$ be a super-modular fusion category such that $N_{\Q(\dim(\cC)/\Q)}\left(\dim(\cC)/2\right)=p$, where $p>5$ is a prime. Then $\cC$ is pointed.
\end{theorem}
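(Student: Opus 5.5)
First I split according to whether $\dim(\cC)$ is rational. If $\dim(\cC)\notin\Z$, then Proposition \ref{propositionsquarefree} applies with $d=p$. It forces $d=5$, which contradicts $p>5$, so this case cannot occur. Hence $\dim(\cC)\in\Z$, and then $\dim(\cC)/2=p$, i.e. $\dim(\cC)=2p$. If moreover $\K=\Q$, Lemma \ref{lemmadimension} gives that $\cC$ is pointed. So I assume $h=[\K:\Q]\ge 2$ and aim for a contradiction.

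By Proposition \ref{propositionfield}, $\K\subseteq\Q(\zeta_p)$ with $2\le h<\sqrt p$ and $hm\le p$. Corollary \ref{corollaryp=7} already disposes of $p=7$, so I take $p\ge 11$. The first substantive step is to show that $\K$ is real, i.e. $\K\subseteq\Q(\zeta_p)^+$. Dimensions are real, but in a super-modular (or merely spherical pseudo-unitary-free) setting they may be negative, so they are still real numbers; thus $\K$ is totally real, since every conjugate $\sigma(d_X)$ is again a dimension row entry $\hat S_{\hat\sigma(\one),X}/d_{\hat\sigma(\one)}$, hence real. Consequently Lemma \ref{lemmatrace} applies with $L=\K$ and $k=h$.

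Next I exploit orthogonality \eqref{equationorthogonality} with $\sigma=\tau$, namely $\sum_{X\in\I_0}\sigma(d_X)^2=p$. Summing over $\sigma$ gives $\sum_{X\in\I_0}\Tr_{\K/\Q}(d_X^2)=hp$. Now pick any $X\in\I_0$ with $d_X\notin\Q$; one exists because $h\ge2$. Lemma \ref{lemmatrace} gives $\Tr_{\K/\Q}(d_X^2)=\frac{2h}{p-1}\Tr_{\Q(\zeta_p)^+/\Q}(d_X^2)\ge \frac{p(h-1)}{h}$. Every $X$ with $d_X\in\Q$ has $d_X^2\mid 2p$ by integrality of $\dim(\cC)/(2d_X^2)$, hence $d_X^2=1$. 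There are exactly $m$ such objects, and each contributes $h$ to the trace sum. Writing $r$ for the number of irrational $d_X$, I get
\[
hp\ \ge\ mh+r\,\frac{p(h-1)}{h}.
\]
I would also use the Galois orbit $\cO_\one$, which consists of the $h$ objects $\hat\sigma(\one)$. For these, $\dim(\cC)/(2d_{\hat\sigma(\one)}^2)=\sigma(p)=p$ forces $d^2=1$, so $\cO_\one$ lies among the $m$ invertible-dimension objects. Combining with $\sum_{X\in\I_0}d_X^2=p$, the rational part contributes $m$ and the irrational part contributes $p-m>0$.

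The main obstacle is turning these bounds into an actual contradiction. The inequality above only yields $r\le h^2(p-m)/(p(h-1))$, which is not absurd by itself. My plan for the hardest step is to bring in Lemma \ref{lemmaPpower} and the $\Gamma_\theta$-representation. Some $\theta_X^2$ has order divisible by $p$, so $\phi$ restricted to the $\SL_2(\Z_{p^B})$-factor is nontrivial. A nontrivial representation of $\SL_2(\Z_p)$ has dimension at least $(p-1)/2$, and $\phi$ has dimension $|\I_0|^2$. In addition, the Galois twist $\theta^2_{\hat\sigma(X)}=\theta^2_{\hat\sigma(\one)}\sigma^2(\theta_X^2)$ forces $p$-th roots of unity to spread over orbits of size related to $(p-1)/\gcd$. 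I would try to show that these orbits force $|\I_0|$ (equivalently $m+r$) to be at least about $\sqrt{(p-1)/2}$, with each irrational $d_X$ lying in an orbit of size $h$. Feeding this back in, each such orbit contributes at least $p(h-1)$ to $hp$, and the rational objects contribute at least $mh\ge h^2$. This gives $hp\ge p(h-1)+h^2+\dots$, which fails once two irrational orbits or enough extra invertibles appear. The remaining case, a single irrational orbit, I would rule out by checking the $\hat S$-orthogonality between $v_\sigma$ and the rows of that orbit directly.
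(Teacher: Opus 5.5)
\paragraph{Verdict.} There is a genuine gap: the reductions are right, but the final step is not an argument, and the tools you propose for it would not produce a contradiction.

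\paragraph{What matches the paper.} Your reductions follow the paper exactly: ruling out $\dim(\cC)\notin\Z$ via Proposition \ref{propositionsquarefree}, handling $\K=\Q$ by Lemma \ref{lemmadimension}, invoking Proposition \ref{propositionfield} and Corollary \ref{corollaryp=7}, and placing $\K$ inside $\Q(\zeta_p)^+$. You are also right that the count $hp\ge mh+r\,p(h-1)/h$ is not contradictory on its own.

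\paragraph{Why your last paragraph does not close the proof.} That paragraph carries the whole proof, and its ingredients fall short.
\begin{itemize}
\item A lower bound on $\dim\phi=|\I_0|^2$ from the minimal degree of $\SL_2(\Z_p)$ gives only $|\I_0|^2\ge(p-1)/2$. This is compatible with $\sum_{X\in\I_0}d_X^2=p$.
\item Grouping the irrational objects into orbits of size $h$ only reproduces $p\ge mh$, which you already have.
\item The "single irrational orbit'' case is left to an unspecified orthogonality check.
\item The case where the object supplied by Lemma \ref{lemmaPpower} has \emph{rational} dimension is not addressed at all.
\end{itemize}

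\paragraph{The missing idea.} You need to follow the orbit of an \emph{object} under the permutation $\hat\sigma$, not the $\Gal(\K/\Q)$-orbit of the value $d_X$. Take $\sigma=\sigma_a$ with $a\equiv1\pmod{2^A}$ and $a\equiv g\pmod{p^B}$, where $g$ is a primitive root.
\begin{enumerate}
\item Write $(\theta_X^2)_p=\zeta_{p^B}^{b_X}$. Equation \eqref{equationdiagonalGaloistwist} gives $b_{\hat\sigma(X)}=g^2b_X+c$, where $c=b_{\hat\sigma(\one)}$.
\item Since $\hat\sigma^h(\one)=\one$, iterating gives $c\,(1+g^2+\cdots+g^{2(h-1)})=0$. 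Because $2h<p-1$, this geometric sum is a unit, so $c=0$.
\item Let $X$ be the object from Lemma \ref{lemmaPpower}, with $(\theta_X^2)_p$ of order $p^e$, $e\ge1$. Then the objects $\hat\sigma^j(X)$ for $0\le j<L:=p^{e-1}(p-1)/2$ are pairwise distinct and none equals $\one$. In particular, this orbit is much longer than $h$.
\item Since $d^2_{\hat\sigma^j(X)}=\sigma^j(d_X^2)$ and $h\mid L$, summing over the orbit gives $p-1\ge \frac{L}{h}\Tr_{\K/\Q}(d_X^2)=p^{e-1}\Tr_{\Q(\zeta_p)^+/\Q}(d_X^2)$.
\item Each conjugate of $d_X^2$ appears with multiplicity $L/h\ge(p-1)/(2h)$. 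This is exactly what lets Lemma \ref{lemmatrace}, a bound on the trace over $\Q(\zeta_p)^+$ rather than over $\K$, force $d_X\in\Q$.
\item Then every orbit member has $d^2=1$. So $m\ge L+1\ge(p+1)/2$, and $hm\ge p+1$, contradicting Proposition \ref{propositionfield}.
\end{enumerate}
Without this orbit-length argument, your proposal does not reach a contradiction.
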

\begin{proof}
If $\dim(\cC)\notin\Z$, this is proved in Proposition \ref{propositionsquarefree}. 
If $\dim(\cC)=2p$ and suppose that  $\cC$ is not pointed.  Proposition~\ref{propositionfield} implies
$2\le h<\sqrt p$, $hm\le p$, and $\K\subseteq \Q(\zeta_p)^+$ with $h\mid n$, where $n:=\frac{p-1}{2}$.
The case $p=7$ has already been excluded by Corollary \ref{corollaryp=7}, so $p\ge11$.

Choose $a\equiv1\pmod{2^A}$ primitive modulo $p^B$, and write
$\sigma=\sigma_a, \hat{\sigma}=\hat{\sigma}_a,g=a\bmod p^B$.
Let $(\theta_X^2)_p$ be the $p$-primary component of $\theta_X^2$, $\forall X\in \cO(\cB)$. Let 
$(\theta_X^2)_p=\zeta_{p^B}^{b_X},
 b_X\in\Z/p^B\Z$ and $b_\one=0$. 
Equation~\eqref{equationdiagonalGaloistwist} gives the  relation $
 b_{\hat{\sigma}(X)}=g^2b_X+c$, where $c=b_{\hat{\sigma}(\one)}$.
Since $\sigma|_\K$ has order $h$, 
$\hat{\sigma}^h(\one)=\one$.  Iterating the above equation at the unit object $\one$ yields \begin{align*}
 0=c\bigl(1+g^2+\cdots+g^{2(h-1)}\bigr)
 \quad\text{in }\Z_{p^B}.
 \end{align*}
The inequalities $h<\sqrt p$  and $p>5$ imply $2h<p-1$.
Because $g$ is primitive modulo $p$, both $g^2-1$ and $g^{2h}-1$
are nonzero modulo $p$.  Therefore,
$
 1+g^2+\cdots+g^{2(h-1)}
 =\frac{g^{2h}-1}{g^2-1}
$
is a unit modulo $p^B$, so $c=0$.  Consequently,
$
 b_{\hat{\sigma}(X)}=g^2b_X$.

By Lemma~\ref{lemmaPpower}, there exists a simple object $X$ such that $(\theta_X^2)_p$ has
order $p^e$ with $e\ge1$, equivalently, $b_X=p^{B-e}t$ and $p\nmid t$. For any positive integer $r$, notice that $g^{2r}b_X\equiv b_X \pmod {p^B}$ if and only if $g^{2r}\equiv 1 \pmod {p^e}$, so the order of $g^2$  modulo $p^e$ is 
$ L=p^{e-1}n$.  Hence
$X,\hat{\sigma}(X),\ldots,\hat{\sigma}^{L-1}(X)$ are distinct objects.
None is $\one$, since all their $p$-primary twist components are
nontrivial.

By the Galois symmetry, we know 
$d_{\hat{\sigma}^j(X)}^2=\sigma^j(d_X^2)$.
Since $h\mid n\mid L$ and $\sigma|_\K$ generates $\Gal(\K/\Q)$,
\begin{align*}
 p-1
 \ge\sum_{j=0}^{L-1}d_{\hat{\sigma}^j(X)}^2
 =\frac{L}{h}\Tr_{\K/\Q}(d_X^2)
 =p^{e-1}\Tr_{\Q(\zeta_p)^+/\Q}(d_X^2).
\end{align*}
If $d_X\notin\Q$, Lemma~\ref{lemmatrace} makes the last expression
at least $p$, a contradiction.  Thus $d_X\in\Z$, which means
 $d_X^2=1$ by \cite[Corollary 3.4] {Yu2020}, so every object in the orbit of $X$ therefore has squared
dimension $1$.
Notice that the unit object $\one$ also has squared dimension $1$ and lies outside
the orbit, thus
$
 m\geq L+1\geq\frac{p+1}{2}.
$
However, we have 
$
 p\ge hm\ge p+1
$ by Proposition \ref{propositionfield} , impossible.  Hence $\cC$ is pointed.
\end{proof}

\section{Modular fusion category of global dimension  $2p$}\label{sectionmodular}
 In this section, we classify modular fusion categories of global dimension $2p$, where $p>5$ is a prime.

\begin{lemma}\label{lem:perfect}
Let $\cC$ be a non-pointed modular fusion category of global dimension $2p$ with prime $p>5$. Then
$\cC_{\pt}=\Vec$ and $\cC_{\ad}=\cC$.
\end{lemma}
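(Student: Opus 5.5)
The plan is to use the standard double-centralizer relations in a modular category together with the classification of pre-modular categories of prime global dimension. Two facts drive the argument. First, for a modular fusion category $\cC$ we have $(\cC_{\pt})'=\cC_{\ad}$ and $\dim(\cD)\dim(\cD')=\dim(\cC)$ for every fusion subcategory $\cD\subseteq\cC$ \cite{DrGNO1,EGNO}. Second, $\dim(\cD)$ divides $\dim(\cC)$, in the sense that the ratio is an algebraic integer. Since $\dim(\cC_{\pt})=|G|$ is an integer, where $G$ is the group of invertible simple objects, we get $|G|\in\{1,2,p,2p\}$. If $|G|=2p$ then $\cC=\cC_{\pt}$ is pointed, which is excluded. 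So it suffices to rule out $|G|=2$ and $|G|=p$; once $|G|=1$, we get $\cC_{\ad}=(\cC_{\pt})'=\Vec'=\cC$.

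Next I would control $\cC_{\pt}\cap\cC_{\ad}=\cC_{\pt}\cap\cC_{\pt}'$, the M\"uger center of $\cC_{\pt}$. Suppose $|G|\in\{2,p\}$; then $\dim(\cC_{\ad})=2p/|G|$ is the complementary prime. The intersection is a fusion subcategory of both $\cC_{\pt}$ and $\cC_{\ad}$. Its dimension is an integer (it is pointed), and it divides both $|G|$ and $2p/|G|$ as algebraic integers. These two numbers are distinct primes, so the intersection is $\Vec$. Hence $\cC_{\pt}$ is non-degenerate, i.e.\ modular, and by M\"uger's decomposition theorem $\cC\simeq\cC_{\pt}\boxtimes\cC_{\pt}'=\cC_{\pt}\boxtimes\cC_{\ad}$ as braided categories. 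In particular $\cC_{\ad}$ is itself modular.

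Now $\cC_{\ad}$ is a modular, hence pre-modular, fusion category of prime global dimension $q\in\{2,p\}$, and $q\neq 5$ because $p>5$. By \cite[Proposition 5.11]{Schopieray2021}, $\cC_{\ad}$ is pointed. Then $\cC\simeq\cC_{\pt}\boxtimes\cC_{\ad}$ is pointed, contradicting the hypothesis. So $|G|=1$, giving $\cC_{\pt}=\Vec$ and therefore $\cC_{\ad}=\cC$.

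The step needing the most care is the divisibility argument showing $\cC_{\pt}\cap\cC_{\ad}=\Vec$. It requires that the intersection is a fusion subcategory whose dimension divides that of each ambient subcategory. The dimension is an integer here because the intersection sits inside $\cC_{\pt}$. Dividing both of two distinct primes then forces it to equal $1$. Everything else is a direct application of the cited structural results.
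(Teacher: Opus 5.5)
Your proof is correct and follows essentially the same route as the paper: show that $\cC_\pt$ is modular, split $\cC\simeq\cC_\pt\boxtimes\cC_\ad$ by M\"uger's decomposition, and apply Schopieray's prime-dimension result to $\cC_\ad$ to reach a contradiction. The only minor difference is how you get $\cC_\pt\cap\cC_\ad=\Vec$: you use the two complementary primes, whereas the paper argues that the M\"uger center of any fusion subcategory has squared dimension dividing the square-free number $2p$.
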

\begin{proof}
Let $\mathcal{E}$ be a non-trivial fusion subcategory of $\cC$. Then the M\"{u}ger center $\mathcal{E}'$ of $\mathcal{E}$ is a symmetric fusion subcategory of $\cC$,  which must be trivial as $\dim(\mathcal{E}')^2\mid \dim(\cC)$ by \cite[Theorem 3.10]{DrGNO1}, that is, $\mathcal{E}$ is modular.  In particular, the pointed subcategory $\cC_{\pt}$ is therefore modular. Then $\cC\cong\cC_\pt\boxtimes\cC_\ad$ as $(\cC_\pt )'_\cC=\cC_\ad$ \cite[Corollary 3.27]{DrGNO1}. Assume $\cC_\pt$ is non-trivial below. 
 Notice that $\cC$ is not pointed, so $\dim(\cC_\pt)=2,p$. Then  $\dim(\cC_\ad)$ also is a prime, which then implies $\cC_\ad$ is pointed by \cite[Theorem 5.12]{Schopieray2021}, it is contradiction.
\end{proof}

Let $T$ be  the
unnormalized twist matrix of $\cC$ with   $N=\operatorname{ord}(T)$.
 For any linear modular lift
$(s,t)$ with $n=\operatorname{ord}(t)$,   the
representation factors through $\SL_2(\Z_n)$, 
$N\mid n\mid12N$  and 
\begin{align}\label{Galois}
 G_a=\sigma_a(s)s^{-1}
=\rho\begin{pmatrix}a&0\\0&a^{-1}\end{pmatrix},\quad t_{\widehat\sigma_a(X)}=t_X^{a^2},\quad \forall X\in\cO(\cC), 
\end{align}
where $\sigma_a(\zeta_n)=\zeta_n^a$ and $G_a$ is a signed permutation
matrix \cite[Theorem II]{DLN}. Also $S$ has entries in $\Q(\zeta_N)$. The diagonal matrix is interpreted modulo
$n$. Moreover,  $G_a=t^a s t^b s t^a s^{-1}$, with
$ab\equiv1\pmod n$, is exactly this diagonal element.

\begin{proposition}\label{propositionlevel}
Let $\cC$ be a modular fusion category of global dimension $2p$ with $p>5$. Then there is a linear modular lift $\rho_\cC$ with
$
 \operatorname{ord}(T)\mid4p,\
 \operatorname{ord}(t)\mid8p,
$ where $t=\rho_\cC(\hxt)$.
Therefore, we have 
$L\subseteq\Q(\zeta_{4p})^+$.
\end{proposition}
\begin{proof}
The   Vafa's theorem implies $ 
 N\mid D^{5/2}$ by \cite[Corollary 8.18.2]{EGNO}, so
$N\mid4p^2$. If $p^2\mid N$, let $t=\rho_\cC(\mathfrak{t})$, where $\rho_\cC$ is a lift of the projective congruence representation of $\cC$, then there exists a simple object $X$ of $\cC$ such that  $p^2\mid \ord (t_{X,X})$, so Equation~\eqref{Galois}
would imply
$ \rank(\cC)\geq\frac{p(p-1)}2>2p$, it is impossible. Thus $N\mid4p$, thus $d_X\in\Q(\zeta_{4p})^+$.

Notice that the Gauss sum $\tau^+(\cC)$ lies in $\Q(\zeta_{4p})$,
so the multiplicative central charge
$\xi(\cC)=\tau^+/\sqrt{2p}$, which is a root of
unity, lies in $\Q(\zeta_{8p})$. Meanwhile, the map $\xi\mapsto\xi^3$  
defines an automorphism   the group $\mu_{8p}$ of roots of unity in  $\Q(\zeta_{8p})$ as $p>5$. Choose
$\gamma\in\mu_{8p}$ with $\gamma^3=\xi$. Then
\[
 s=S/\sqrt{2p},\qquad t=\gamma^{-1}T
\]
defines a linear modular representation $\rho_\cC$ with $\ord(t)\mid8p$.
\end{proof}
The Chinese Remainder Theorem   shows the congruence representation $\rho_\cC$ factors through $\SL_2(\Z_{8p})\cong\SL_2(\Z_8)\times\SL_2(\Z_p)$ by Proposition \ref{propositionlevel}. 
\begin{proposition}\label{prop:even-local}
 Let $\chi\otimes \phi$  be an irreducible summand   of $\rho_\cC$. Then $\dim(\chi)$ is even and 
$\dim (\phi)\leq p$. In particular, $\rank(\cC)$ is even.
\end{proposition}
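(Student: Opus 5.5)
We have $\rho_\cC\cong\bigoplus\chi\otimes\phi$, with $\chi$ an irreducible representation of $\SL_2(\Z_8)$ and $\phi$ an irreducible representation of $\SL_2(\Z_p)$. The plan is to prove the two dimension claims separately and then deduce the parity of the rank.

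\textbf{The bound $\dim(\phi)\le p$.} This should be immediate. The irreducible complex representations of $\SL_2(\mathbb F_p)$ have dimensions in
\[
\{1,\ p,\ p\pm1,\ (p\pm1)/2\},
\]
so the largest possible value is $p+1$. The discrete series have dimension $p-1\le p$. The only thing to rule out is the principal series of dimension $p+1$. Write $u=\phi(\hxt^{8})$; its eigenvalues are $p$-th roots of unity. For a principal series representation of dimension $p+1$, each nontrivial $p$-th root of unity occurs exactly once, and $1$ occurs twice. An alternative is a direct count: $\dim(\chi)\dim(\phi)\le \rank(\cC)$. A cruder bound via Lemma \ref{lem:perfect} suffices. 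I would argue as follows.
\begin{itemize}
\item[(i)] If some twist has order divisible by $p$, then by Equation \eqref{Galois} its Galois orbit under $a\mapsto a^2$ has size $(p-1)/2$.
\item[(ii)] Combine this with $\sum_X d_X^2=2p$ and the trace bound of Lemma \ref{lemmatrace}, as in the proof of Theorem \ref{supermodularpointed}. This shows $\rank(\cC)<2p$. Hence, once $\dim(\chi)\ge2$ is known, $\dim(\chi)(p+1)>2p>\rank(\cC)$, which excludes dimension $p+1$.
\end{itemize}
In fact $\dim(\chi)\ge2$ together with the fact that the orbits have size at most $p-1$ makes $\dim(\phi)\le p$ follow.

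\textbf{The claim that $\dim(\chi)$ is even.} This is the main obstacle. I would use the determinant and the $\hxs^2$ action. Since $\hxs^2$ is central, $\rho(\hxs^2)=C$ is the charge conjugation, and on $\chi\otimes\phi$ it acts by a scalar $\chi(\hxs^2)\phi(\hxs^2)$. The key input is the eigenvalues of $t$. By Proposition \ref{propositionlevel}, $\ord(T)\mid 4p$ and $t=\gamma^{-1}T$ with $\gamma^3=\xi$.
\begin{itemize}
\item[(i)] The $2$-part of $t$ has order dividing $8$.
\item[(ii)] The irreducible representations of $\SL_2(\Z_8)$ of odd dimension are the $1$- and $3$-dimensional ones. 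Inspecting the character table (or the known list of level-$8$ irreducibles) shows that for these, $\chi(\hxt)$ has eigenvalues that are either all equal, or equal up to powers $\zeta_8^{k}$ incompatible with $T$ having $2$-part dividing $4$ after the scalar $\gamma$ is accounted for.
\end{itemize}
Concretely, I would show that the odd-dimensional $\chi$ force $\xi$ to take a value that contradicts Gauss sum considerations. One ingredient is that $\tau^+(\cC)$ is built from $\theta_X$ of order dividing $4p$, so $\xi^2$ has a controlled $2$-part. The other ingredient is the known possible central charges in global dimension $2p$: $\xi$ is an $8$-th root of unity times a $p$-part. This leaves essentially $\chi$ with $\chi(\hxt)$ scalar of order $8$ as the dangerous case.

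I expect this last matching of $\chi(\hxt)$ eigenvalues against $\gamma^{-1}T$ to require the most care. I would settle it by listing the level-$8$ irreducibles of $\SL_2(\Z_8)$ with their $\hxt$-spectra and checking each one.

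\textbf{Parity of the rank.} Finally, $\rank(\cC)=\sum\dim(\chi)\dim(\phi)$. Since every $\dim(\chi)$ is even, every summand is even, and so $\rank(\cC)$ is even.
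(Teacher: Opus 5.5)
\textbf{Genuine gap: the evenness of $\dim(\chi)$ is not proved.}

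Your argument for $\dim(\phi)\le p$ is fine once evenness is known, and it is then the paper's argument. From $\dim(\chi)\ge2$ and $\rank(\cC)\le 2p$ you get $\dim(\chi)(p+1)\ge 2p+2>\rank(\cC)$, which excludes the principal series. You should drop the detour claiming $\rank(\cC)<2p$ via Lemma~\ref{lemmatrace}. That lemma concerns subfields of $\Q(\zeta_p)^+$ of degree $<\sqrt p$ in the super-modular setting and does not apply here, and the strict inequality is not needed.

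The evenness of $\dim(\chi)$ is the real content of the proposition, and you defer it to an unspecified inspection of character tables. As set up, that inspection cannot succeed, for three reasons:
\begin{itemize}
\item Your input (i), that the $2$-part of $t$ has order dividing $8$, holds for every representation factoring through $\SL_2(\Z_8)$. So it cannot exclude the $1$- and $3$-dimensional irreducibles.
\item The $\hxs^2$-action gives no parity information.
\item Your ``dangerous case'' of a scalar $\chi(\hxt)$ of order $8$ does not exist. A scalar $\chi(\hxt)$ forces $\chi$ to be a linear character, and linear characters of $\SL_2(\Z_8)$ send $\hxt$ into $\{\pm1,\pm i\}$.
\end{itemize}

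The missing idea is that $\sqrt2\notin\Q(\zeta_{4p})$. The paper uses it through Galois symmetry. Take $a\equiv5\pmod 8$ and $a\equiv1\pmod p$. Then $\sigma_a$ fixes the entries of $S$, which lie in $\Q(\zeta_{4p})$, since $a\equiv1\pmod{4p}$. But $\sigma_a$ negates $\sqrt{2p}$, so $G_a=-I$, i.e.\ $\rho_\cC(5I_2,I_2)=-I$. Hence $\chi(5I_2)=-I$ on every constituent. In $\SL_2(\Z_8)$ one has $5I_2=[u,v]$ with $u=\begin{pmatrix}1&2\\0&1\end{pmatrix}$ and $v=\begin{pmatrix}1&0\\2&1\end{pmatrix}$. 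Taking determinants gives $(-1)^{\dim(\chi)}=1$.

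Your $t$-spectrum idea can be completed, but only with this same input:
\begin{itemize}
\item $\tau^+(\cC)\in\Q(\zeta_{4p})$ while $\sqrt{2p}\notin\Q(\zeta_{4p})$. So the $2$-part of $\xi(\cC)$, and hence of $\gamma$, is a primitive $8$th root of unity.
\item Since $\ord(T)\mid 4p$, every eigenvalue of $\chi(\hxt)$ is then an odd power of $\zeta_8$.
\item $\det\chi$ is a linear character, so $\det\chi(\hxt)\in\{\pm1,\pm i\}$. A product of $\dim(\chi)$ odd powers of $\zeta_8$ lies in this set only if $\dim(\chi)$ is even.
\end{itemize}
Without one of these two arguments, the proposal does not prove the statement.
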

\begin{proof}
Let $a\in\Z$ such that  $a\equiv5\pmod8$ and $a\equiv1\pmod p$.
Then the  automorphism $\sigma_a$ fixes $\Q(\zeta_{4p})$
as  $a\equiv1\pmod{4p}$ and $\sigma_a(\sqrt{2})=    -\sqrt{2}$. Therefore $\sigma_a(\sqrt{2p})=-\sqrt{2p}$, which implies
\begin{equation}\label{eq:minus}
G_a=\sigma_a(s)s^{-1}=-I.
\end{equation}
The  corresponding  diagonal element of $\SL_2(\Z_8)\times\SL_2(\Z_p)$ in Equation
\eqref{Galois} is $(5I_2,I_2)$.
In the subgroup $\SL_2(\Z/8\Z)$, notice that  we have 
\begin{equation}\label{eq:commutator}
[u,v]:=uvu^{-1}v^{-1} =5I_2, \quad \text{where} ~u=
 \begin{pmatrix}1&2\\0&1\end{pmatrix},\quad v=
 \begin{pmatrix}1&0\\2&1\end{pmatrix},
\end{equation}
 Then for the irreducible summand  $\chi\otimes \phi$,  Equation~\eqref{eq:minus}
forces $\chi(5I_2)=-I_A$. Taking determinants in
Equation \eqref{eq:commutator} gives $(-1)^{\dim A}=1$, hence $\dim (\chi)\geq2$ is even. Since
$\dim (\chi)\dim (\phi)\leq \rank(\cC)\leq2p$,  $\dim (\phi)\leq p$.
\end{proof}

\begin{proposition}\label{propositionquadratic}
Let $\cC$ a modular fusion category of global dimension $2p$ with prime $p>5$. If $h=[\K:\Q]\leq2$, then $\cC$ is pointed.
\end{proposition}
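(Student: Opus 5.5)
We split according to whether $h=1$ or $h=2$.

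\emph{Case $h=1$.} Here all dimensions $d_X$ are integers. Then Lemma \ref{lemmadimension} applies directly, because $2p$ is square-free and $\cC'=\Vec$. Hence $\cC$ is pointed.

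\emph{Case $h=2$.} Suppose $\cC$ is not pointed and $\K$ is quadratic. By Lemma \ref{lem:perfect} we have $\cC_\pt=\Vec$ and $\cC_\ad=\cC$, so $\one$ is the only simple object with $d_X^2=1$. By Proposition \ref{propositionlevel}, $\K\subseteq\Q(\zeta_{4p})^+$. The real quadratic subfields of $\Q(\zeta_{4p})$ are $\Q(\sqrt{p})$ (when $p\equiv1\pmod 4$), $\Q(\sqrt2)$ is excluded since $8\nmid 4p$, and $\Q(\sqrt{\pm p})$ / $\Q(i)$ give only one real option, so $\K=\Q(\sqrt p)$ with $p\equiv 1\pmod 4$. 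The Galois orbit $\cO_\one=\{\one,Z\}$ has $\dim(\cC)/d_Z^2=\sigma(2p)=2p$, so $d_Z^2=1$ and $Z$ is invertible. This contradicts $\cC_\pt=\Vec$.

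The orbit step needs justification: the formal codegree attached to $h_{\hat\sigma(\one)}$ is $\sigma(\dim\cC)=\dim\cC$, so it equals $\dim(\cC)/d_{\hat\sigma(\one)}^2$. If that argument has a gap, I would instead use the Galois symmetry $d_{\hat\sigma(\one)}=\pm 1/\sigma(d_\one)$-type identity, namely $s_{\one,\hat\sigma(\one)}=\pm\sigma(s_{\one\one})$, which gives $d_{\hat\sigma(\one)}^2=\sigma(2p)/2p=1$.

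The same contradiction in fact holds for any $h\ge2$, which is suspicious. The actual constraint comes from $\hat\sigma(\one)\ne\one$ being invertible. Since $\cC_\pt=\Vec$, we must have $\hat\sigma(\one)=\one$ for all $\sigma$. Then $\sigma(d_X)=\sigma(s_{\one X}/s_{\one\one})=\pm s_{\one X}/s_{\one\one}=\pm d_X$ up to the sign of $G_\sigma$. With $d_\one=1$ fixed, the sign is $+$, so all $d_X$ are rational and $h=1$. This settles the proposition cleanly, and I would rest the proof on this route.

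\emph{Main obstacle.} The key point is that $\sigma(s_{\one\one})=\epsilon\, s_{\hat\sigma(\one),\one}$ forces $d_{\hat\sigma(\one)}^2=1$, using that $\dim\cC$ is rational. This needs the normalization of $s$ by the rational-square-root issue $\sqrt{2p}$: one must check that $\sigma(\sqrt{2p})=\pm\sqrt{2p}$, which changes only signs. That makes the argument valid, and it reduces everything to Lemma \ref{lemmadimension}.
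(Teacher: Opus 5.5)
Your case $h=1$ is correct and is what the paper does. The case $h=2$, however, has a genuine gap, and it is not the sign of $\sqrt{2p}$ that you flag as the main obstacle.

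The identity $d_{\hat\sigma(\one)}^2=1$ is correct: the formal codegree of $\sigma\circ h_\one$ is $\sigma(\dim\cC)=\dim\cC$. The next inference, ``$d_Z^2=1$, hence $Z$ is invertible'', is false. Invertibility means $\FPdim(Z)=1$. Since $\cC$ is not assumed pseudo-unitary, the categorical dimension of $Z$ for the fixed spherical structure can be $\pm1$ while $\FPdim(Z)>1$.

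A counterexample is $\cD\boxtimes\cD^\sigma$, where $\cD=\cC(\mathfrak{sl}_2,3)_{\ad}$ and $\cD^\sigma$ is its Galois conjugate. This category has global dimension $\frac{5+\sqrt5}{2}\cdot\frac{5-\sqrt5}{2}=5$, satisfies $(\cD\boxtimes\cD^\sigma)_{\pt}=\Vec$, and has $h=2$. Its object $\hat\sigma(\one)=\tau\boxtimes\tau'$ has $d_{\tau\boxtimes\tau'}=\frac{1+\sqrt5}{2}\cdot\frac{1-\sqrt5}{2}=-1$ but $\FPdim(\tau\boxtimes\tau')=\bigl(\frac{1+\sqrt5}{2}\bigr)^2$. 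Your argument uses only $\dim\cC\in\Q$ and $\cC_{\pt}=\Vec$, so it would prove that this category has $h=1$. Your own remark that the argument works for every $h\ge2$ was the right alarm. A smaller error: $\Q(\sqrt p)\subseteq\Q(\zeta_{4p})^+$ for every odd $p$, so you cannot restrict to $p\equiv1\pmod 4$. The paper has to treat $p\equiv3\pmod4$ separately.

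What is missing is an argument that actually uses $\dim\cC=2p$, $\K=\Q(\sqrt p)$, $\cC_{\ad}=\cC$ and $p>5$. It must rule out non-invertible simple objects of dimension $\pm1$, and more generally control $\FPdim(\cC)=\max_Y\dim(\cC)/d_Y^2$, the largest formal codegree. For example, $\FPdim(\cC)$ exceeds $2p$ as soon as some $|d_Y|<1$, as in the counterexample above.

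The paper does this as follows. It writes $d_X=a_X+b_X\sqrt p$, or $(a_X+b_X\sqrt p)/2$ when $p\equiv1\pmod4$. It combines $\sum_X d_X^2=2p$ with $\sum_X d_X\sigma(d_X)=0$, which is the orthogonality of the rows of $S$ indexed by $\one$ and $\hat\sigma(\one)\neq\one$. This gives $\sum_X a_Xb_X=0$ and the exact values of $\sum_X a_X^2$ and $\sum_X b_X^2$, which pin down the multiset of dimensions; $p>5$ enters here, e.g.\ to exclude $a_X^2=5p$. Then the largest formal codegree either equals $2p$, forcing $\cC$ to be integral and hence pointed, or occurs with multiplicity two, contradicting Lemma \ref{lemmaFPdimension}.
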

\begin{proof}
If $\K=\Q$,   $\cC$ is pointed by Lemma \ref{lemmadimension}. 
Assume $[\K:\Q]=2$ below,  notice that  the unique quadratic subfield of
$\Q(\zeta_{4p})^+$ is $\Q(\sqrt p)$, so $\K=\Q(\sqrt{p})$. By
Lemma~\ref{lem:perfect}, we have
$\cC_{\ad}=\cC$. 

Assume $p\equiv3\pmod4$, write $d_X=a_X+b_X\sqrt p$ with integers
$a_X,b_X$. 
 Let $\sigma\in\Gal(L/\Q)$ such that $\sigma(\sqrt{p})=-\sqrt{p}$, then $\sum_{X\in\cO(\cC)}d_X\sigma(d_X)=0$ by \cite[Lemma 8.14.1]{EGNO}, so  we have 
\[
 \sum_{X\in\cO(\cC)} a_Xb_X=0,\qquad
 \sum_{X\in\cO(\cC)} a_X^2=p,\qquad \sum_{X\in\cO(\cC)} b_X^2=1.
\]
Therefore, there is exactly one simple object $X$ such that $b_X\neq0$ and $a_X=0$, and  $d_Y=\pm1$ for all other simple objects $Y\in\cO(\cC)$, which then implies  $\FPdim(\cC)=\frac{\dim(\cC)}{d_X^2}=2p$ as it is the largest formal codegree of $\cC$, so $\cC$ is integral and pointed by  \cite[Theorem 2.8, Lemma 3.5]{Yu2020}, it is a contradiction.

Suppose $p\equiv1\pmod4$. Let
$d_X=(a_X+b_X\sqrt p)/2$, where $a_X,b_X\in\Z$.
The same argument gives
\[
\sum_{X\in\cO(\cC)}a_Xb_X=0,\qquad
 \sum_{X\in\cO(\cC)}a_X^2=4p,\qquad \sum_{X\in\cO(\cC)}b_X^2=4.
\]
If $b_X^2=4$ for some simple object $X$, then we again obtain $\FPdim(\cC)=2p$ as before. Hence,  there are exactly four simple objects with nonzero $b_X$, each equal to $\pm1$. Then $N_{\K/\Q}(d_X)=\frac{a_X^2-p}{4}$, so $a_X$ is odd and $N_{\K/\Q}(d)\mid 2p$.  Let $f=\frac{\dim(\cC)}{d_X^2}$ be the corresponding formal codegree of $\cC$, so $\Tr_{\K/ \Q}(f)=\frac{p(a_X^2+p)}{N_{\K/\Q}(d)^2}$. Since $a_X$ is odd and $p\equiv 1\pmod 4$, $p(a^2+p)\equiv 2\pmod 4$. If $N_{\K/\Q}(d)$ is even, then $4\mid N_{\K/\Q}(d)^2$, which contradicts to $p(a^2+p)\equiv 2\pmod 4$. Therefore, 
we have \(N_{\K/\Q}(d)\in\{\pm1,\pm p\}\).
And then compare this with \(N_{\K/\Q}(d)=\frac{a^2-p}{4}\).
If  $N_{\K/\Q}(d)=\pm p$, then $a^2=5p$ or $a^2=-3p$, which is impossible for
$p>5$. If $N_{\K/\Q}(d)=1$, then $p=a^2-4=(|a|-2)(|a|+2)$, 
impossible. Thus $N_{\K/\Q}(d)=-1$ and $a^2=p-4$ for all
four simple objects.

Then leaves $16$ for the sum of
$a_X^2$ of simple  objects whose dimensions are rational, so
$a_X^2=4$ for these objects, which then means that there are exactly four such objects. The squared dimensions of the remaining four simple objects  $d^2$ and its conjugates with  multiplicities being  two, respectively.  However, 
 the largest formal codegree $\FPdim(\cC)$ then  occurs twice,
contradicting Lemma~\ref{lemmaFPdimension}.
\end{proof}

\begin{lemma}\label{lemmaunitarymatrix}
Let $A$ and $\varphi$ be unitary operators on a finite-dimensional complex
Hilbert space, and let $B:=\varphi A\varphi^{-1}$. Suppose $AB=-BA$.
If $w$ is a unit vector and $Aw=\lambda w$, then $
 |\langle w,\varphi (w)\rangle|^2\leq\frac{1}{2}$.
\end{lemma}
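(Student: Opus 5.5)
The plan is to reduce everything to the eigenspace decomposition of $B$ relative to $A$. Since $AB=-BA$, the operator $B$ maps the $\lambda$-eigenspace of $A$ into the $(-\lambda)$-eigenspace. Concretely, if $Aw=\lambda w$, then $A(Bw)=-BAw=-\lambda Bw$. Because $A$ is unitary, $\lambda\neq 0$, so $\lambda\neq-\lambda$. The vectors $w$ and $Bw$ therefore lie in distinct eigenspaces of a normal (unitary) operator, and so they are orthogonal: $\langle w,Bw\rangle=0$.

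Next, relate $\varphi(w)$ to $B$. Set $u:=\varphi(w)$, a unit vector since $\varphi$ is unitary. Then $Bu=\varphi A\varphi^{-1}\varphi w=\varphi(Aw)=\lambda u$, so $u$ is a $\lambda$-eigenvector of $B$. By the symmetric argument with the roles of $A$ and $B$ swapped (the relation $BA=-AB$ is symmetric), $Au$ is a $(-\lambda)$-eigenvector of $B$. In particular $u$ and $Au$ are orthogonal.

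To bound $|\langle w,u\rangle|$, decompose the space into the eigenspaces of $A$. Write $P_\mu$ for the orthogonal projection onto the $\mu$-eigenspace of $A$. Since $w\in\operatorname{Im}P_\lambda$, we have $\langle w,u\rangle=\langle w,P_\lambda u\rangle$, so $|\langle w,u\rangle|\le \|P_\lambda u\|$. It therefore suffices to show $\|P_\lambda u\|^2\le \frac12$.

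For this, use that $u$ is an eigenvector of $B$, and that $B$ swaps the $\mu$ and $-\mu$ eigenspaces of $A$ isometrically, since $B$ is unitary. From $Bu=\lambda u$ we get $P_{-\lambda}Bu=\lambda P_{-\lambda}u$. On the other hand, $P_{-\lambda}B=BP_{\lambda}$, because $B$ carries the $\lambda$-eigenspace onto the $-\lambda$-eigenspace and preserves the orthogonal decomposition into $\pm\mu$ blocks. Hence $\|P_{-\lambda}u\|=|\lambda|^{-1}\|BP_\lambda u\|=\|P_\lambda u\|$, using $|\lambda|=1$. Since $P_\lambda u$ and $P_{-\lambda}u$ are orthogonal components of the unit vector $u$, we get $2\|P_\lambda u\|^2\le\|u\|^2=1$, which gives $|\langle w,\varphi(w)\rangle|^2\le \frac12$.

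The only delicate step is justifying $P_{-\lambda}B=BP_\lambda$. I would argue it as follows: $B$ maps $\ker(A-\mu)$ into $\ker(A+\mu)$ for every eigenvalue $\mu$, and these eigenspaces are mutually orthogonal and span the whole space. So $B$ permutes the blocks, and $B\,\ker(A-\lambda)=\ker(A+\lambda)$ exactly. Applying $P_{-\lambda}$ to $Bu=\sum_\mu BP_\mu u$ then kills every term except $BP_\lambda u$.
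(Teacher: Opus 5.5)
Your proof is correct, and it relies on the same mechanism as the paper's proof: $B$ exchanges the $\lambda$- and $(-\lambda)$-eigenspaces of $A$, and $\varphi(w)$ is a $\lambda$-eigenvector of $B$, so $\varphi(w)$ must carry equal weight on the two sides.

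The packaging differs. The paper never introduces spectral projections. It applies Bessel's inequality to the orthonormal pair $w, Bw$ and uses the one-line identity $|\langle Bw,\varphi(w)\rangle| = |\langle w, B^{-1}\varphi(w)\rangle| = |\lambda|^{-1}|\langle w,\varphi(w)\rangle|$. That is shorter and needs no intertwining relation.

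Your route through $P_{-\lambda}B = BP_\lambda$ costs a little more bookkeeping. In return it proves the stronger statement $\|P_\lambda\varphi(w)\|^2\le\frac12$: the overlap of $\varphi(w)$ with the whole $\lambda$-eigenspace of $A$, not just with $w$, is at most $\frac12$.

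Two small remarks. First, your observation that $u$ and $Au$ are orthogonal is never used. Second, to get $P_{-\lambda}B=BP_\lambda$ you only need that $B$ maps $\ker(A-\mu)$ into $\ker(A+\mu)$, together with the mutual orthogonality of the eigenspaces of the normal operator $A$. Your claim that $B$ maps $\ker(A-\lambda)$ onto $\ker(A+\lambda)$ is true by a dimension count, but the argument does not need it.
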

\begin{proof}
The vectors $w$ and $Bw$ are eigenvectors of
$A$ with distinct eigenvalues $\lambda$ and $-\lambda$, respectively, so they are orthogonal. Denote $x:=\varphi(w)$, then $B(x)=\lambda x$ and $
 |\langle B(w),x\rangle|=|\langle w,x\rangle|$.
The Bessel's inequality for the orthonormal pair $w,B(w)$ gives $
 2|\langle w,\varphi (w)\rangle|^2
 =|\langle w,x\rangle|^2+|\langle B(w),x\rangle|^2
 \leq\|x\|^2=1$.
\end{proof}

\begin{proposition}\label{propositionfieldbound}
Let $\cC$ be a  modular fusion category of global
dimension $2p$, where $p>5$. Then $h^2\leq p$.
\end{proposition}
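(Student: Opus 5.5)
The plan is to run the orthogonality argument of Proposition \ref{propositionfield} in the modular setting, and to use Lemma \ref{lemmaunitarymatrix} to save the extra factor $2$ that appears because $\dim(\cC)=2p$ rather than $p$.

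First I record the unit orbit. For $\sigma\in\Gal(\K/\Q)$ put $v_\sigma=(\sigma(d_X))_{X\in\cO(\cC)}\in V_\cC$. By the Galois symmetry $\sigma\circ h_\one=h_{\hat\sigma(\one)}$, each $v_\sigma$ is the row of $S$ indexed by $\hat\sigma(\one)$, divided by $d_{\hat\sigma(\one)}$. Since $\dim(\cC)=2p\in\Z$, the formal codegree of $\sigma\circ h_\one$ is
\[
\sigma(2p)=2p=\frac{2p}{d_{\hat\sigma(\one)}^2},
\]
so $d_{\hat\sigma(\one)}^2=1$. Hence the orbit $\cO_\one$ consists of $h$ distinct simple objects, each of dimension $\pm1$. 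The orthogonality of $S$ gives
\[
v_\sigma\cdot v_\tau=2p\,\delta_{\sigma,\tau}.
\]
Now set $w=\sum_\sigma v_\sigma$. For $Y=\hat\tau(\one)\in\cO_\one$ the coordinate is $w_Y=\Tr_{\K/\Q}(d_Y)=\pm h$. This yields
\[
2ph=\|w\|^2\ \ge\ h\cdot h^2,
\]
that is, $h^2\le 2p$. This is exactly the argument of Proposition \ref{propositionfield}, and it falls short of the claim by a factor of $2$.

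To gain the factor $2$, I would use the $\SL_2(\Z_8)$-part of $\rho_\cC$ from Proposition \ref{propositionlevel} and Proposition \ref{prop:even-local}. On every irreducible summand $\chi\otimes\phi$, the element $5I_2$ acts as $-1$, and it is the commutator $[u,v]$ of \eqref{eq:commutator}. So $A=\rho_\cC(u)$ and $B=\rho_\cC(v)\rho_\cC(u)\rho_\cC(v)^{-1}$ anticommute, with $B=\varphi A\varphi^{-1}$ for $\varphi=\rho_\cC(v)$. Both $A$ and $\varphi$ are built from $t^{\pm2}$ and $s$, so they are unitary. The normalized vector $w/\|w\|$ is supported on the $\pm1$-dimensional objects of $\cO_\one$ and the rational-dimension objects; $A$ is diagonal, essentially $t^2$. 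I would decompose $w$ (or $e_\one$) into $A$-eigenvectors and apply Lemma \ref{lemmaunitarymatrix} to each piece. This bounds the overlap of $w$ with its image under $\varphi$ by $\tfrac12$. Because $\varphi$ acts through $s$, it sends the row vectors $v_\sigma$ to multiples of the basis vectors $e_{\hat\sigma(\one)}$. The intended consequence is that at most half of $\|w\|^2$ can be concentrated on the coordinates of $\cO_\one$. This would sharpen the displayed inequality to $2ph\ge 2h^3$, i.e.\ $h^2\le p$.

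The main obstacle is this last step: making the eigen-decomposition compatible with Lemma \ref{lemmaunitarymatrix}. The twists on $\cO_\one$ are related by $t_{\hat\sigma_a(X)}=t_X^{a^2}$, so $w$ is generally not a single $A$-eigenvector. My first attempt would be to restrict to the $2$-primary component of the twists. Via \eqref{Galois} with $a\equiv1\pmod 8$, the $2$-primary parts of the $t_Y$, $Y\in\cO_\one$, all coincide, so $w$ projected to the relevant summand is an eigenvector of $\chi(u)$ and the lemma applies summand-by-summand.

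If this fails, the fallback is arithmetic. I would combine $h\mid\frac{p-1}{2}$ (from $\K\subseteq\Q(\zeta_{4p})^+$, whose only ramified odd prime is $p$) with Lemma \ref{lemmatrace} applied to the objects outside $\cO_\one$. Together with $h^2\le 2p$, the aim is to exclude the narrow window $p<h^2\le 2p$.
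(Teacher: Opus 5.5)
Your orthogonality step is fine, but it only yields $h^2\le 2p$. The step meant to recover the factor $2$ fails as set up, for two reasons.

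First, the anticommuting pair is wrong. Reading $u,v$ in the $\SL_2(\Z_8)$-factor, $[u,v]=5I_2$ gives $vuv^{-1}\equiv 5I_2\cdot u \pmod 8$. So with $\varphi=\rho_\cC(v)$ you get $B=\rho_\cC(5I_2,I_2)\,A=-A$, which \emph{commutes} with $A$. What $[u,v]=5I_2$ actually says is that $\rho_\cC(u)$ and $\rho_\cC(v)$ themselves anticommute. To put this in the form of Lemma~\ref{lemmaunitarymatrix} you need the element conjugating $u$ to $v$, namely $\hxs$. Replace $v$ by $\hxs u\hxs^{-1}=\begin{pmatrix}1&0\\-2&1\end{pmatrix}$, which still satisfies $[u,v]\equiv 5I_2$, and take $\varphi=s$.

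Second, Lemma~\ref{lemmaunitarymatrix} needs a single $A$-eigenvector. Splitting a vector into $A$-eigencomponents and applying the lemma piece by piece does not bound $\langle w,\varphi w\rangle$, because the cross terms are uncontrolled; for an eigenvector of $s$ the overlap has modulus $1$. Your $w=\sum_\sigma v_\sigma$ equals $\sqrt{2ph}\,s\,w_0$, where $w_0=h^{-1/2}\sum_{X\in\cO_\one}d_Xe_X$. Its coordinates are $\Tr_{\K/\Q}(d_X)$, so it is spread over all simple objects, not just $\cO_\one$ and the rational ones, and it is not an eigenvector of a diagonal operator in general. Moreover, with $A=t^2$ even $w_0$ is not an eigenvector, because the $p$-primary parts of the twists on $\cO_\one$ differ.

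The missing idea is to change the operator rather than project the vector. Take $A=t^m$ with $m\equiv 2\pmod 8$ and $m\equiv 0\pmod p$, i.e.\ $A=\rho_\cC(u,I_2)$, and apply the lemma to $w_0$ with $\varphi=s$.
\begin{itemize}
\item The eigenvalue of $A$ on $e_X$ is $t_X^m$. Every $X\in\cO_\one$ is $\hat\sigma_a(\one)$ with $a\equiv 1\pmod 8$, so $t_X^m=t_\one^{ma^2}=t_\one^m$ because $8p\mid m(a^2-1)$. Hence $Aw_0=\lambda w_0$; this is the precise form of your ``restrict to the $2$-primary part'' idea.
\item $B=sAs^{-1}=\rho_\cC(v,I_2)$ satisfies $AB=-BA$ by \eqref{eq:minus}.
\item Since $S_{X,Y}=d_Xd_Y$ on $\cO_\one$, one gets $\langle w_0,sw_0\rangle=h/\sqrt{2p}$, and the lemma gives $h^2/(2p)\le\frac12$.
\end{itemize}
This is exactly your target inequality, since the fraction of $\|w\|^2$ carried by $\cO_\one$ is $h^3/(2ph)=|\langle w_0,sw_0\rangle|^2$.

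Your arithmetic fallback cannot replace this argument. The bound $\ge p$ in Lemma~\ref{lemmatrace} assumes $k<\sqrt p$, which is the very inequality being proved, so it gives nothing in the window $p<h^2\le 2p$.
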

\begin{proof}
Let $(s,t)$ be  the unitary modular lift of level dividing $8p$ from
Proposition~\ref{propositionlevel}. Notice that  the  restriction of $\Gamma:=\Gal(\Q(\zeta_{8p})/\Q(\zeta_8))\cong\Z_p^\times$ to $\Q(\zeta_{4p})^+$ is an isomorphism, then 
Thus $\Gal(\Q(\zeta_{8p})/\Q(\zeta_8))$ restricts surjectively to $\Gal(\K/\Q)$.
Let $\mathcal O_\one$ be the $\Gamma$-orbit of the unit object $\one$, then $\cO_\one$ has $[\K:\Q]$ simple objects and $d_X^2=1$ for  all $X\in\cO_\one$.
Moreover, for $X,Y\in\mathcal O_\one$, $
 S_{X,Y}=d_Xd_Y$.
Indeed, let $h_Y(-):=\sigma\circ\dim(-)$. Then
$S_{X,Y}/d_Y=\sigma(d_X)=d_X$ as $d_X=\pm1$.

Define $
 w=\frac1{\sqrt h}\sum_{X\in\mathcal O_\one}d_Xe_X$, where $h=[\K:\Q]$. Then 
 we have 
\begin{equation}\label{eq:vacuum-matrix-coefficient}
 \|w\|=1,\qquad \langle w,sw\rangle=\frac{h}{\sqrt{2p}}.
\end{equation}
Choose an integer $m$ such that $m\equiv2\pmod8$ and
$m\equiv0\pmod p$, and set
\[
 A:=t^m=\rho\left(\begin{pmatrix}1&2\\0&1\end{pmatrix},I_2\right),
 \qquad B=sAs^{-1}.
\]
All labels in $\mathcal O_\one$ have the same $A$-eigenvalue.
In fact, for $X=\widehat\sigma_a(\mathbf1)$ with
$a\equiv1\pmod8$, Galois symmetry gives
$t_X=t_{\mathbf1}^{a^2}$. Because $p\mid m$ and $8\mid a^2-1$,
we have $8p\mid m(a^2-1)$, and therefore
$t_X^m=t_{\mathbf1}^m$.
Consequently $Aw=\lambda w$ with $\lambda=t_{\mathbf1}^m$.

Let $
u=\begin{pmatrix}1&2\\0&1\end{pmatrix},
 v=\begin{pmatrix}1&0\\-2&1\end{pmatrix}$. 
Conjugation by the standard modular generator $\hxs$ sends $u$ to $v$,
and direct multiplication gives $[u,v]=5I_2$ modulo $8$.
Thus Equation~\eqref{eq:minus} implies
\[
 ABA^{-1}B^{-1}=\rho_\cC(5I_2,I_2)=-I,
 \qquad AB=-BA.
\]
Lemma~\ref{lemmaunitarymatrix} and Equation
\eqref{eq:vacuum-matrix-coefficient} now yield $\frac{h^2}{2p}\leq\frac{1}{2}$,
which is precisely $h^2\leq p$.
\end{proof}


\begin{lemma}\label{lemmaborel}
Let $W$ be an irreducible complex representation of $G=\SL_2(\Z_p)$
with $\dim (W)\leq p$. Set
\[
 U=\left\{\begin{pmatrix}1&x\\0&1\end{pmatrix}:x\in\Z_p\right\},
 \qquad
 H=\left\{\begin{pmatrix}b^2&0\\0&b^{-2}\end{pmatrix}:b\in\Z_p^\times\right\}.
\]
Then $H$ acts trivially on $W^U$.
\end{lemma}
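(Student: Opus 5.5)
Throughout I use the paper's convention $\Z_r:=\Z/r$, so $G=\SL_2(\Z_p)=\SL_2(\Z/p\Z)$ is the finite group of order $p(p^2-1)$, and $U$, $H$ are subgroups of it. This is also the case needed later: by Proposition \ref{propositionlevel}, $\rho_\cC$ factors through $\SL_2(\Z_8)\times\SL_2(\Z_p)$ with $\Z_p=\Z/p$, and the factor $\phi$ of Proposition \ref{prop:even-local} is such a $W$. (If $\Z_p$ were instead read as the $p$-adic integers, I would first reduce to level $p$. A continuous irreducible representation factors through some $\SL_2(\Z/p^k)$, and the representations that do not factor through level $p$ have dimension at least $(p^2-1)/2>p$. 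So the hypothesis $\dim(W)\le p$ puts us in the case below; the paper's convention makes this unnecessary.)

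\textbf{Step 1: only induced representations matter.} Let $B=TU$ be the upper Borel subgroup, with $T$ the diagonal torus, and note $H\subseteq T$. If $W^U\neq0$, Frobenius reciprocity gives a nonzero $B$-map $W\to\Ind_U^B\one$. Since $\Ind_U^B\one$ is the sum of the characters $\chi$ of $T$ inflated to $B$, we get $W\hookrightarrow\Ind_B^G\chi$ for some $\chi$. Hence cuspidal $W$ have $W^U=0$, and the claim is vacuous for them. It remains to treat irreducible constituents of principal series $\Ind_B^G\chi$, where $\chi(\operatorname{diag}(c,c^{-1}))=\chi(c)$ for a character $\chi$ of $\Z_p^\times$.

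\textbf{Step 2: the dimension bound forces $\chi^2=1$.} By the standard character table, $\Ind_B^G\chi$ is irreducible of dimension $p+1$ when $\chi^2\neq1$, which $\dim(W)\le p$ excludes. So $\chi$ is trivial or the quadratic character $\chi_0$. For $\chi=1$, $W$ is the trivial representation or the Steinberg representation. For $\chi=\chi_0$, $W$ is one of the two halves, of dimension $(p+1)/2$. In every remaining case $\chi(b^2)=1$ for all $b\in\Z_p^\times$.

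\textbf{Step 3: $H$ acts trivially on $(\Ind_B^G\chi)^U$ when $\chi(b^2)=1$.} Realize $\Ind_B^G\chi$ as functions $f$ on $G$ with $f(\beta g)=\chi(\beta)f(g)$, with $G$ acting by right translation. A $U$-invariant $f$ is determined by its values on the Bruhat representatives $1$ and $w=\hxs$ of $B\backslash G/U$. For $t=\operatorname{diag}(b^2,b^{-2})$ we have
\[
(t\cdot f)(1)=f(t)=\chi(b^2)f(1)=f(1),\qquad (t\cdot f)(w)=f(wt)=f(t^{-1}w)=\chi(b^{-2})f(w)=f(w),
\]
and $t$ normalizes $U$. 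Hence $t$ fixes every $U$-invariant vector of $\Ind_B^G\chi$. Since $W\subseteq\Ind_B^G\chi$ is $G$-stable, $W^U\subseteq(\Ind_B^G\chi)^U$, and therefore $H$ acts trivially on $W^U$.

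The main point needing care is Step 2: one must invoke the standard classification of irreducible representations of $\SL_2(\Z/p)$ correctly. In particular, the irreducible constituents of dimension $\le p$ of principal series come only from $\chi\in\{1,\chi_0\}$, and the cuspidal ones (dimensions $p-1$ and $(p-1)/2$) have no nonzero $U$-fixed vectors.
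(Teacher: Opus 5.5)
Your proof is correct. Its core agrees with the paper's: Frobenius reciprocity places $W$ in a principal series, and $\Ind_B^G\chi$ is irreducible of dimension $p+1$ when $\chi^2\neq1$, which $\dim(W)\le p$ rules out.

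The two arguments differ in how they pass from ``$\chi^2=1$'' to ``$H$ acts trivially on $W^U$''.

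\textbf{The paper.} It works weight by weight. A vector of $A$-weight $\lambda$ in $W^U$ gives a $B$-map $\lambda\to W$, so $W$ is a quotient of $\Ind_B^G\lambda$. This forces $\lambda^2=1$ for every weight of $W^U$, which immediately gives triviality on $H$. It proves the irreducibility itself via Mackey's formula, $\dim\operatorname{End}_G(\Ind_B^G\lambda)=1+\delta_{\lambda,\lambda^{-1}}$.

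\textbf{Your route.} You produce a single embedding $W\hookrightarrow\Ind_B^G\chi$, passing through $\Ind_U^B\one$, and quote the character table for irreducibility. Because you control only one $\chi$, you then need Step 3 to handle all weights of $W^U$ at once. Step 3 is the computation on the two double cosets $B\backslash G/U$ showing that $T$ acts on the two-dimensional space $(\Ind_B^G\chi)^U$ by $\chi$ and $\chi^{-1}$. That computation is right, including $wt=t^{-1}w$, the fact that $t$ normalizes $U$, and the implicit well-definedness on $BwU$ coming from $B\cap wUw^{-1}=1$.

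\textbf{Comparison.} Your approach gives slightly more explicit information about $U$-invariants in principal series. The paper's weight-by-weight use of reciprocity makes Step 3 unnecessary and keeps the proof self-contained without citing the character table. Your remark about the $p$-adic reading of $\Z_p$ is harmless but not needed under the paper's convention $\Z_r=\Z/r$.
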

\begin{proof}
Let $A$ be the full diagonal torus and $ B:=AU$ the upper
triangular Borel subgroup. Then the $A$-module $W^U$ is a sum of character
spaces. If a character $\lambda$ occurs, extend it to $B$ by
making $U$ act trivially. Frobenius reciprocity shows that $W$ is a summand of
$\Ind_{B}^G(\lambda)$. Let $s:=\begin{pmatrix}0&-1\\1&0\end{pmatrix}$, then there are two Bruhat double cosets, namely $B$ and $BsB$. Notice that for any element $u\in A$, we have $s^{-1}us=u^{-1}$, so nontrivial Weyl element acts on
$A$ by inversion \cite{Serre}. Then the Mackey's formula gives
\[\dim\operatorname{End}_G(\Ind_{B}^G\lambda)
 =1+\delta_{\lambda,\lambda^{-1}}.
\]
If $\lambda^2\ne1$, the induced representation is irreducible, of
dimension $[G:B]=p+1$. This contradicts $\dim( W)\leq p$.
Thus every character occurring in $W^U$ has square one and is trivial
on $H$.
\end{proof}

\begin{theorem}\label{modularpointed}
Let $\cC$ be a modular fusion category of global dimension $2p$, where   $p>5$ is a prime. Then $\cC$ is pointed.
\end{theorem}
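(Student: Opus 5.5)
Suppose, for a contradiction, that $\cC$ is not pointed. Then Lemma~\ref{lem:perfect} gives $\cC_\pt=\Vec$ and $\cC_\ad=\cC$, and Proposition~\ref{propositionquadratic} gives $h=[\K:\Q]\ge3$. Proposition~\ref{propositionfieldbound} gives $h^2\le p$, and Proposition~\ref{propositionlevel} gives $\K\subseteq\Q(\zeta_{4p})^+$. The only quadratic subfield of $\Q(\zeta_{4p})^+$ over which $\Q(\zeta_{4p})^+$ could pick up extra $2$-part is already accounted for, so with $h\ge3$ odd or even we have $\K\subseteq\Q(\zeta_p)^+$ and $h\mid n=\frac{p-1}{2}$. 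The plan is to imitate the proof of Theorem~\ref{supermodularpointed}, with the $\SL_2(\Z_p)$-factor of $\rho_\cC$ playing the role of $\Gamma_\theta$.

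\emph{Step 1: some twist has $p$-primary part nontrivial.} Decompose $\rho_\cC=\bigoplus\chi\otimes\phi$ as in Proposition~\ref{prop:even-local}. If every $\phi$ were trivial, then for $a\equiv1\pmod8$ primitive modulo $p$ we would get $G_a=I$ by Equation~\eqref{Galois}. Hence $\sigma_a(d_X)=d_X$, and since $\sigma_a$ generates $\Gal(\K/\Q)$ this forces $\K=\Q$, which is a contradiction. So some $\phi$ is nontrivial, and then $\rho(\hxt^8)$, whose $p$-component generates $U$, acts nontrivially. Therefore there is a simple $X$ with $(t_X)_p\neq(t_\one)_p$. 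Here $(\cdot)_p$ denotes the $p$-primary part, and normalizing $t_\one$ by the central charge changes this by a common factor.

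\emph{Step 2: the Galois orbit relation.} Use $t_{\hat\sigma_a(X)}=t_X^{a^2}$ with $a\equiv1\pmod 8$ and $g=a\bmod p$ primitive. Then $(t_X)_p/(t_\one)_p$ transforms by $g^2$, up to the shift $c$ coming from $\hat\sigma(\one)$. The same argument as before shows $c=0$: iterate $h$ times and use $\hat\sigma^h(\one)=\one$ together with $\frac{g^{2h}-1}{g^2-1}$ being a unit, which holds because $2h<p-1$. Since $N\mid 4p$, the order is exactly $p$, so the orbit of $X$ under $\hat\sigma$ has length $L=n=\frac{p-1}{2}$ and avoids $\mathcal O_\one$.

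\emph{Step 3: trace bound.} The Galois symmetry gives
\[
\sum_{j=0}^{n-1}d^2_{\hat\sigma^j(X)}=\Tr_{\Q(\zeta_p)^+/\Q}(d_X^2).
\]
The left side is at most $2p-h$, since the objects of $\mathcal O_\one$ have dimension $\pm1$ and lie outside this orbit. If $d_X\notin\Q$, Lemma~\ref{lemmatrace} with $k=h\ge3$ gives a lower bound of $\frac{np(h-1)}{h^2}$, and I would compare this with $2p-h$. For $h\ge3$ and $h^2\le p$ one has $n(h-1)/h^2\ge2$ whenever $(p-1)(h-1)\ge4h^2$. This holds except possibly when $h^2$ is close to $p$, and those boundary cases I expect to handle using $h\mid n$ (which gives $h^2<p$, hence $h^2\le p-1$ fails only if ...). \textbf{This bound is the step I expect to be the main obstacle.} The factor $2$ is lost compared with the super-modular case, where the total was $p$ rather than $2p$, so a sharper estimate is needed. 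I would try to use $\alpha=d_X^2$ directly: it lies in $\cO_\K\setminus\Q$, and as a square one has $\sum(a_i-a_j)^2\ge$ more than $k-1$. Alternatively, I would find a second orbit (that of $Y=f$-type, or of a different Step 1 object) to double the count.

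\emph{Step 4: the rational case.} If $d_X\in\Q$, then $d_X=\pm1$ as in Lemma~\ref{lemmadimension}, so $\cC$ has at least $n+h>\frac{p-1}{2}$ invertible objects. This contradicts $\cC_\pt=\Vec$ from Lemma~\ref{lem:perfect}. Hence $\cC$ is pointed.
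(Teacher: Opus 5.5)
Your Steps~1--2 are sound, and Step~2 reproduces the first step of the paper's proof: the relation $b_{\hat\sigma^h(\one)}=g^{2h}b_\one$ with $g^{2h}\not\equiv1\pmod p$ forces $t_\one^8=1$. After that the argument fails in three places.

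(a) The containment $\K\subseteq\Q(\zeta_p)^+$ with $h\mid n$ is asserted without proof. $\Gal(\Q(\zeta_{4p})^+/\Q)$ is cyclic of order $p-1$, and its degree-$h$ subfield lies in $\Q(\zeta_p)^+$ only if $h\mid\frac{p-1}{2}$. For $p\equiv3\pmod4$ and $h$ even it does not; for example $p=43$, $h=6$ is allowed by $h^2\le p$. In that case Lemma~\ref{lemmatrace} is unavailable, and your $n$-term orbit sum is not a multiple of a trace. The ramification argument of Proposition~\ref{propositionfield} cannot supply the containment, because here $\sum_X d_X^2=2p$ lies in every prime above $2$.

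(b) Even granting (a), the comparison $\frac{np(h-1)}{h^2}>2p-h$ fails for the admissible pairs $(p,h)=(13,3)$ and $(17,4)$. These give $\frac{52}{3}<23$ and $25.5<30$. You flagged this but offered no fix.

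(c) Step~4 is false as stated: $d_X=\pm1$ does not make $X$ invertible when $\cC$ is not pseudo-unitary. In $\cC(\mathfrak{sl}_2,3)_\ad\boxtimes\cC(\mathfrak{sl}_2,3)_\ad^\sigma$ the simple object $\tau\boxtimes\tau'$ has dimension $-1$ but $\FPdim=\bigl(\frac{1+\sqrt5}{2}\bigr)^2$. If the implication held, the $h\ge2$ objects of $\cO_\one$, all with $d^2=1$, would already contradict $\cC_\pt=\Vec$ and the whole proof would be trivial. The correct substitute is the bound $hm\le 2p$, obtained as in Proposition~\ref{propositionfield}. With $m\ge n+h$ it gives only $h(n+h)\le 2p$, which for $h=3$ is no contradiction once $p\ge17$.

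The missing idea is what the paper does right after your Step~2. Since $t_\one^8=1$ and the $\SL_2(\Z_p)$-component of $\rho(\hxt^8)$ generates $U$, the vector $e_\one$ is $U$-invariant. By Proposition~\ref{prop:even-local}, every $\SL_2(\Z_p)$-constituent has dimension at most $p$: the $\SL_2(\Z_8)$-factor has even dimension, because $\rho(5I_2,I_2)=-I$ and $5I_2$ is a commutator. So no constituent is a principal series of dimension $p+1$, and Lemma~\ref{lemmaborel} shows that the square torus $H$ acts trivially on its $U$-invariants. Hence $G_ae_\one=e_\one$, i.e.\ $\hat\sigma_a(\one)=\one$, for every $a\equiv1\pmod 8$ that is a square mod $p$. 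These $\sigma_a$ fix $\K$ and restrict to an index-$2$ subgroup of $\Gal(\Q(\zeta_{4p})^+/\Q)$, so $h\le2$, and Proposition~\ref{propositionquadratic} concludes. This controls the orbit of $\one$ directly, so it needs no trace estimate, no containment in $\Q(\zeta_p)^+$, and no separate rational-dimension case.
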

\begin{proof}
Let $\rho_\cC$ be the lift of level dividing $8p$ in Proposition \ref{propositionlevel}. Let $e_{\mathbf1}$ be
 the basis vector of the  unit object, then its  $t$-eigenvalue is
$t_{\mathbf1}=\gamma^{-1}$.
First, this eigenvalue has trivial $p$-part. Otherwise,  by
Equation \eqref{Galois}, the automorphisms
$\sigma_a$ with $a\equiv1\pmod8$ would produce $(p-1)/2$ different eigenvalues in the
Galois orbit $\cO_\one$, which  has cardinality $[\K:\Q]$. However,  it follows from Proposition~\ref{propositionfieldbound} that
\[
 \frac{p-1}{2}\leq [\K:\Q]\leq\sqrt p,
\]
 it is impossible. Thus
$t_{\mathbf1}^8=1$. Notice that the eighth power of the standard upper
unipotent element 
in $\SL_2(\Z_8)\times \SL_2(\Z_p)$ is
$\left(I_2,\begin{pmatrix}1&8\\0&1\end{pmatrix}\right)$. Since $8$ is nonzero modulo $p$, its second component generates $U$.
It follows that $e_{\mathbf1}\in V_\cC^{\{1\}\times U}$.
Decompose $V_\cC$ into irreducible constituents $\chi\otimes \phi$.
By Proposition~\ref{prop:even-local}, each $\phi$ has dimension at most
$p$. Lemma~\ref{lemmaborel} says that $\{1\}\times H$ fixes the whole
space $V_\cC^{\{1\}\times U}$, and hence fixes $e_{\mathbf1}$.

Now take $a\equiv1\pmod8$ whose residue modulo $p$ is a square.
Equation~\eqref{Galois} gives $G_a e_{\mathbf1}=e_{\mathbf1}$.
The Galois permutation therefore fixes the $e_\one$, so $\sigma_a(d_X)=d_X$ for every $X$, then the restrictions of these automorphisms to
$\Q(\zeta_{4p})^+$ form a subgroup of order $(p-1)/2$. Meanwhile,  $\K\subseteq 
\Q(\zeta_{4p})^+$ by Proposition \ref{propositionlevel} and this
subgroup fixes $\K$ pointwise, so $[L:\Q]\leq2$ , and $\cC$ is pointed by Proposition \ref{propositionquadratic} .
\end{proof}

Combing the conclusions  of Theorem \ref{supermodularpointed} and Theorem \ref{modularpointed} with that of \cite{Ostrik,Yu2022}, we have the following 
\begin{corollary}
Let $\cC$ be a pre-modular fusion category of global dimension $2p$, where $p$ is a prime. Assume $\cC'\subseteq\sVec$, then $\cC$ is pointed if $p\neq2,5$. Thus, either  $\cC$ is pointed, or
\begin{align*}
\cC\cong\left\{
  \begin{array}{ll}
    \mathcal{I}, & \hbox{ if  $p=2$;} \\
    \cB\boxtimes \cC(\mathfrak{sl}_2,3)_\ad\boxtimes\cC(\mathfrak{sl}_2,3)_\ad^\sigma, & \hbox{ if $p=5$,}
  \end{array}
\right.
\end{align*}
where $\cB$ is a pointed pre-modular fusion category of dimension $2$ and $\cC(\mathfrak{sl}_2,3)_\ad^\sigma$ is a Galois conjugate of $\cC(\mathfrak{sl}_2,3)_\ad^\sigma$ with $\dim(\cC(\mathfrak{sl}_2,3)_\ad^\sigma)=\frac{5-\sqrt{5}}{2}$.
\end{corollary}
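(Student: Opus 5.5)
The corollary is an assembly of results already proved, organized by the prime $p$ and by the Müger center. Since $\cC'\subseteq\sVec$, either $\cC'=\Vec$, so $\cC$ is modular, or $\cC'=\sVec$, so $\cC$ is super-modular.

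\emph{Case $p>5$.} Here the first assertion follows directly.
If $\cC$ is modular, Theorem \ref{modularpointed} says $\cC$ is pointed.
If $\cC$ is super-modular, then $\dim(\cC)=2p\in\Z$, so $N_{\Q(\dim(\cC))/\Q}(\dim(\cC)/2)=p$. Theorem \ref{supermodularpointed} then gives that $\cC$ is pointed.

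\emph{Case $p=3$.} This is the remaining prime with $p\neq2,5$, and it lies outside the range of the two theorems. I would appeal to the classification in \cite{Yu2022} for global dimension $6$. An alternative route for the modular case is Lemma \ref{lem:perfect}-type reasoning combined with \cite{Ostrik}.

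\emph{Case $p=2$.} Global dimension $4$ is covered by \cite{Ostrik}, which classifies low-rank pre-modular categories. A non-pointed category with $\cC'\subseteq\sVec$ and $\dim(\cC)=4$ must be of Ising type $\mathcal I$: rank $3$ with dimensions $1,1,\sqrt2$.

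\emph{Case $p=5$.} Global dimension $10$ is classified in \cite{Yu2022}. The non-pointed examples with $\cC'\subseteq\sVec$ come from the Fibonacci phenomenon of Proposition \ref{propositionsquarefree}. The category splits as a product of a pointed rank-$2$ pre-modular category $\cB$ with $\cC(\mathfrak{sl}_2,3)_\ad$ and its Galois conjugate, whose dimensions multiply to $\frac{5+\sqrt5}{2}\cdot\frac{5-\sqrt5}{2}=5$.

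The only real obstacle is bookkeeping. The small primes $p=2,3,5$ are not covered by the theorems of this paper, so the proof must check that the cited classifications \cite{Ostrik,Yu2022} give exactly the stated list. In particular, for $p=3$ I must confirm that these classifications leave only pointed categories, which is what the claim ``pointed if $p\neq2,5$'' requires.
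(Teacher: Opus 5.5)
Your proposal is correct and follows the paper's own argument: for $p>5$ it combines Theorem \ref{modularpointed} (modular case) with Theorem \ref{supermodularpointed} (super-modular case, where $\dim(\cC)=2p\in\Z$ makes the norm hypothesis hold automatically), and it defers $p\le 5$ to \cite{Ostrik,Yu2022}. One small correction: Proposition \ref{propositionsquarefree} does not account for the $p=5$ examples, because it assumes $\dim(\cC)\notin\Z$ while here $\dim(\cC)=10$, so that case rests entirely on the citation to \cite{Yu2022}, exactly as in the paper.
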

 \section*{Acknowledgments}
 Z. Yu  is supported
by NSFC (no.12571041) and Qinglan Project of Yangzhou University. The author acknowledges the use of AI in the development
of this manuscript, including mathematical discussion,  suggestions for  proof steps, and corrections. All
mathematical content was independently verified by the author, who takes full responsibility for the final manuscript.


\noindent \author{Zhiqiang Yu\\ \thanks{Email:\,zhiqyumath@yzu.edu.cn}
\\{\small School  of Mathematics,  Yangzhou University,
Yangzhou 225002, China}
}
\end{document}